\newtheorem{thm}{Theorem}
\newtheorem{prop}{Proposition}
\newtheorem{lemma}{Lemma}
\newtheorem{rem}{Remark}
\newcommand{\be}{\begin{equation}}
\newcommand{\ee}{\end{equation}}
\newcommand{\bea}{\begin{eqnarray}}
\newcommand{\eea}{\end{eqnarray}}
\newcommand{\ve}{{\varepsilon}}
\newcommand{\vphi}{{\varphi}}
\newcommand{\rmd}{{\rm d}}
\newcommand{\bx}{{ {x} }}
\newcommand{\br}{{{r}}}
\newcommand{\bv}{{ {v}}}
\newcommand{\bX}{{X}}
\newcommand{\bu}{{{u}}}
\newcommand{\ol}[1]{\mkern 1.5mu\overline{\mkern-1.5mu#1\mkern-1.5mu}\mkern 1.5mu}
\newcommand{\Dlim}{{\mathcal{D}'}\mbox{-}\lim}
\title[Cascades and Time-Asymmetry]{ Turbulent Cascade Direction and Lagrangian Time-Asymmetry}
\author{Theodore D. Drivas}
\address{Department of Mathematics, Princeton University, Princeton, NJ 08544}
\email{tdrivas@math.princeton.edu}
\date{today}
\begin{document}

\begin{abstract}\vspace{-1mm}
We establish Lagrangian formulae for energy conservation anomalies involving the discrepancy between short-time two-particle dispersion forward and backward in time.  
These results are facilitated by a rigorous version of the Ott-Mann-Gaw\c{e}dzki relation, sometimes described as a ``Lagrangian analogue of the $4/5$--law".   In particular, we prove that for weak solutions of the Euler equations, the Lagrangian forward/backward dispersion measure matches on to the energy defect \cite{O49,DR00} in the sense of distributions.    For strong limits of $d\geq3$ dimensional Navier-Stokes solutions the defect distribution coincides with the viscous dissipation anomaly.  The Lagrangian formula shows that particles released into a $3d$ turbulent flow will initially disperse faster backward--in--time than forward, in agreement with recent theoretical predictions of Jucha et.~al \cite{Jucha14}.  In two dimensions,  we consider strong limits of solutions of the forced Euler equations with increasingly high-wavenumber forcing as a model of an ideal inverse cascade regime.  We show that the same Lagrangian dispersion measure matches onto the anomalous input from the infinite-frequency force. As forcing typically acts as an energy source, this leads to the prediction that particles in $2d$ typically disperse faster forward in time than backward, which is opposite to what occurs in $3d$.  Time-asymmetry of the Lagrangian dispersion is thereby closely tied to the direction of the turbulent cascade, downscale in $d\geq 3$ and upscale in $d=2$. These conclusions lend support to the conjecture of \cite{ED15} that a similar connection holds for time-asymmetry of Richardson two-particle dispersion and cascade direction, albeit at longer times.
\end{abstract}
 
\maketitle
\vspace{-1mm}

\section{Introduction}

 Perhaps the most notable difference between $2d$ and $3d$ incompressible turbulence is the direction of the energy cascade. In three-dimensions, fluid energy is typically transferred from large to small scales via a non-linear process called the \emph{direct} turbulent cascade.  Serving as a sink at the end of this cascade is molecular viscosity $\nu$, which acts to dissipated the kinetic energy deposited at small scales.
Remarkably, observations from experiments and simulations of  forced or freely decaying turbulence show that, in the limit of high Reynolds number (equivalently zero viscosity), the kinetic energy dissipation is non-zero
 \be\label{zerothLaw3d}
  \lim_{\nu\to 0}  \nu\langle |\nabla u|^2\rangle = \varepsilon>0
 \ee
 where  $\langle \cdot \rangle$ is some relevant averaging procedure, space, time or ensemble. This is the so-called ``zeroth law" of turbulence and is often referred to as \emph{anomalous dissipation} and is the central postulate of the celebrated Kolmogorov 1941 (K41) theory \cite{K41}.    This property reflects the fact that in three-dimensions the turbulent cascade is exceptionally effective at transferring energy from large to small scales.  Indeed, in the high--$Re$ limit where the viscous length scale vanishes, the cascade process continues to transport appreciable amounts of energy to scales where it can be effectively dissipated by infinitesimal viscosity \cite{O49,K41,Taylor17}. 

In two-dimensional incompressible turbulence on domains without boundary, viscosity plays a negligible role for the energy budget. For smooth initial data, the viscous energy dissipation at finite times always tends to zero as $\nu\to 0$  \cite{Taylor17,Taylor15}.  It has long been recognized that the source of major differences between $d=2$ and $d\geq3$ is the presence of an additional invariant -- the enstrophy, see e.g. \cite{Lee51,Fjor53,E96}.   Kraichnan \cite{Kr67} (see also \cite{Leith68,B69}) proposed that this extra constraint results in two simultaneous inertial ranges in the flow, an inverse energy cascade range and a forward enstrophy cascade. In the inverse energy cascade range, the energy input by forcing is transported from small to large scales -- in contrast with $3d$ -- where it accumulates until it is depleted, for example, by linear damping or (ineffectually) by viscosity. 

As a model for the inverse energy cascade, consider forced two-dimensional fluid without viscosity with sufficiently smooth initial data.  In this setting, the energy of the fluid can change only due to the input from the forcing. It is well known that typically the energy input undergoes a direct cascade in scales smaller  than the characteristic forcing scale $\ell_f$, while it undergoes an inverse cascade for scales $\ell	 \gtrsim  \ell_f$.    Thus, an extended inverse cascade range can be achieved by using a high-wavenumber forcing, spectrally concentrated around wavenumber $k_f\sim  2\pi/\ell_f$, and considering the limit $k_f\to \infty$. This produces an  ``infinite frequency" forcing which vanishes in the sense of distributions but may continue to input (or remove) energy 
 \be\label{zerothLaw2d}
  \lim_{k_f\to \infty}  \langle u\cdot f \rangle = I\neq0.
 \ee
When \eqref{zerothLaw2d} holds, it is analogous to the dissipative anomaly \eqref{zerothLaw3d} in higher dimensions.
Whenever the forcing acts as an energy source, which is typical, $I$ is positive and we call \eqref{zerothLaw2d} the \emph{anomalous input} or \emph{production anomaly}.

Despite being an intrinsically Eulerian object, the dissipative anomaly  \eqref{zerothLaw3d}  is known to have some interesting connections with Lagrangian aspects of turbulence.   In 1926, Richardson \cite{Rich26} predicted that particles pairs in the inertial range of a high--$Re$ turbulent flow have mean-squared separation that grows as $t^3$, i.e.
\be\label{Rich}
\langle |\delta\bX_{t_0,t}(\br;\bx)|^2\rangle \sim g\varepsilon t^3
\ee
where  $\delta\bX_{t_0,t}(\br;\bx):= \bX_{t_0,t}(\bx+\br)-  \bX_{t_0,t}(\bx)$ is the Lagrangian deviation, $g$ is the Richardson constant, and the tracers particles $X_{t_0,t}(x)$ satisfy
 \be
 \frac{d}{dt} X_{t_0,t}(x) ={u}( X_{t_0,t}(x),t), \qquad  X_{t_0,t_0}(x)= x.
 \ee
Richardson's prediction \eqref{Rich} notably involves the viscous dissipation rate $\varepsilon$ which remains finite in the zero-viscosity limit \eqref{zerothLaw3d}.   Somewhat mysteriously,  Richardson dispersion is observed numerically to be faster backward-in-time than forward for $3d$ turbulence and faster forward-in-time than backward in $2d$ \cite{Faber09,Saw05}.  This observation, as well as insight from toy models, led to the conjecture of \cite{ED15} that the direction of the cascade --  inverse or direct -- and time-asymmetry of Lagrangian particle dispersion are closely related.  

Recent work on mean-squared particle dispersion has shed new light on Lagrangian manifestations of time asymmetry and its connection to the turbulent cascade.  See \cite{Jucha14,Bitane12,Falk13} and also the recent review \cite{Xu16}.  These studies employ the so-called \emph{Ott-Mann-Gaw\c{e}dzki} relation \cite{OttMan05,Falk01}, sometimes described as the ``Lagrangian analog of the $4/5$--law", in order to obtain an explicit short-time expansion for the two-particle dispersion in terms of purely Eulerian quantities.  For  inertial range separations $r$, this relation states:
\begin{equation}\label{OttMann0}
\left.\frac{1}{2} \frac{d}{d \tau} \Big\langle |\delta_r \bv(\tau;\bx,t) |^2\Big\rangle\right|_{\tau=0} \simeq -2  \varepsilon 
\end{equation}
with the Lagrangian velocity $v(\tau,x;t):=\bu(\bX_{t,t+\tau} (\bx),t+\tau)$ and $\delta_r  \bv(\tau;\bx,t):=  v(\tau,x+r;t)-v(\tau,x;t)$.  Standard derivations of the relationship assume spatial isotropy and the average must be either interpreted as over the spatial domain, or as a time/ensemble average provided the fields are homogenous.   

With the Ott-Mann-Gaw\c{e}dzki relation in hand, the relative mean--squared dispersion of Lagrangian tracers for short-times can be calculated using only local (in time) Eulerian quantities in closed form \cite{Bitane12}:
\begin{align} \label{reldispFG}
\Big\langle | \delta \bX_{t,t+\tau}(\br; \bx)-&\br|^2\Big\rangle \approx   S_2^{\bu}(\br,t)  \tau^2 - 2  \varepsilon \tau^3 +{O} (\tau^4)
\end{align}
where $S_2^{{\bu}}(\br,t)  := \langle |\delta \bu(\br,t)|^2\rangle$ is the second-order structure function. In simulations of three-dimensional turbulence, the leading order quadratic and cubic behavior for time differences of order the local turnover time at scale $|r|$ is verified \cite{Bitane12}.      Note that, although the energy dissipation rate $\ve$ appears as coefficients in both cubic terms, the $\tau^3$ term in \eqref{reldispFG} is for short times only and is not the same as the behavior that Richardson predicted \eqref{Rich} which holds at later times.

Recently, Jucha et. al. \cite{Jucha14} realized that for $3d$ turbulent flows,  Eq. \eqref{reldispFG} can be used to predict that pairs of Lagrangian particles initially spread faster backward-in-time than forward-in-time.  This is deduced by inspecting the behavior of \eqref{reldispFG} under time reversal $\tau\to -\tau$ and noting that the ${O}(\tau^2)$ term is invariant whereas the ${O}(\tau^3)$ term changes sign.  Since $ \varepsilon > 0$ for high-Reynolds number 3$d$ turbulence \eqref{zerothLaw3d}, this  ${O}(\tau^3)$ term tends to enhance the dispersion backwards-in-time and deplete the dispersion forwards-in-time, thereby establishing a Lagrangian ``arrow of time".  Unfortunately, at high-Reynolds numbers, the realm of validity of the expansion \eqref{reldispFG} becomes vanishing small. In particular, the Taylor series expansion of the particle trajectories used to derive \eqref{reldispFG} is only guaranteed to converge in a neighborhood of times on the order of the Kolmogorov time-scale $\tau_\eta \sim (\nu/\ve)^{1/2}$. Thus, it is desirable to have an alternative Lagrangian measure of time--asymmetry that remains valid for arbitrarily large Reynolds numbers. 

We prove here that there is such a Lagrangian measure involving the short-time dispersion of tracer particles in \emph{coarse-grained}  (or mollified) fields $\ol{u}_\ell$ instead of their fine-grained counterparts $u$.  In particular, such trajectories satisfy
 \be\label{eqn:traj}
 \frac{d}{dt} X_{t_0,t}^{\ell}(x) =\ol{u}_\ell( X_{t_0,t}^{\ell}(x),t), \qquad  X_{t_0,t_0}^{\ell}(x)= x
 \ee
 with  $\ol{g}_\ell = \int_{\mathbb{T}^d}G_\ell(r) g(x+r) dr$  for any $g\in L^1(\mathbb{T}^d)$ where $G\in C_0^\infty(\mathbb{T}^d)$  is a standard mollifier, compactly supported in the unit ball, and $G_\ell(r) = \ell^{-d} G(r/\ell)$.    Then, the following are novel formulae for the dissipation/input anomalies which are purely Lagrangian in nature (albeit, for asymptotically short-times).

 \begin{thm}\label{theorem} Fix an standard mollifier $\psi\in C_0^\infty(\mathbb{T}^d)$  with  ${\rm supp} (\psi)\subseteq B_1(0)$ and $\psi_R(r)  = R^{-d} \psi(r/R)$ and denote 
$
 \langle f(r) \rangle_R:= \int_{\mathbb{T}^d} f(r) \psi_R(r)dr
$
 for any $f\in L^1( \mathbb{T}^d)$.    \vspace{1mm}
 \begin{enumerate}[label=(\roman*)]
  \item Let $d\geq 2$ and let $u\in  {L^\infty(0,T;L^2(\mathbb{T}^d))}\cap L^3(0,T;L^3(\mathbb{T}^d))$ be any weak solution to the Euler equations with initial data $u_0\in L^2(\mathbb{T}^d)$ and forcing $f\in L^{ {\infty}}(0,T;L^2(\mathbb{T}^d))$.  Then 
   \be\label{Eulerasym}
\lim_{R \to 0}\lim_{\ell \to 0}  \lim_{\tau\to 0}\frac{\langle |\delta X_{t,t+\tau}^{\ell}(r;x)-r|^2\rangle_R - \langle |\delta X_{t,t-\tau}^{\ell}(r;x)-r|^2\rangle_R}{4\tau^3} = -\Pi[\bu]
 \ee
   in the sense of distributions in $\mathbb{T}^d\times[0,T]$,  where the conservation anomaly $\Pi[u]$ is defined by \eqref{EulerBalanceWeak}. \vspace{-2mm}
   
 \item Let $d\geq 3$ and $u^\nu\in L^\infty(0,T;L^2({\mathbb T}^d))\cap L^2(0,T; H^1({\mathbb T}^d))$ be any Leray solutions to the Navier-Stokes equations with initial data $u_0^\nu\in L^2(\mathbb{T}^d)$ and forcing $f^\nu\in L^{ {\infty}}(0,T;L^2(\mathbb{T}^d))$ (with norms uniformly bounded).  Assume  $u^\nu \to u$ strongly in $L^3(0,T;L^3(\mathbb{T}^d))$. Then 
 \be\label{3dasym}
\lim_{R \to 0}\lim_{\ell \to 0}  \lim_{\tau\to 0} \lim_{\nu\to 0} \frac{\langle |\delta X_{t,t+\tau}^{\ell,\nu}(r;x)-r|^2\rangle_R - \langle |\delta X_{t,t-\tau}^{\ell,\nu}(r;x)-r|^2\rangle_R}{4\tau^3} = -\varepsilon[\bu] 
 \ee
  in the sense of distributions in $\mathbb{T}^d\times[0,T]$,  where the dissipative anomaly $\varepsilon[\bu]$ is defined by \eqref{NSLimitBalanceWeak}.  \vspace{3mm}
  
 \item Let $d=2$ and $u^{k_f}\in C([0,\infty); W^{1,r}(\mathbb{T}^2) )$, $r> 3/2$ be any weak solutions to the Euler equations with initial data $u_0^{k_f}\in L^2(\mathbb{T}^2)$ and  $\omega_0^{k_f}:=\nabla^\perp\cdot u_0^{k_f}\in L^r(\mathbb{T}^2)$ and forcing $f^{k_f}\in L^{ {\infty}}([0,\infty);L^2((\mathbb{T}^2))$ such that $f^{k_f}\to 0$ as $k_f\to \infty$ in sense of distributions.   Assume $u^{k_f} \to u$ strongly in $L^3(0,T;L^3(\mathbb{T}^2))$. Then 
 \be\label{2dasym}
\lim_{R \to 0}\lim_{\ell \to 0}  \lim_{\tau\to 0} \lim_{k_f\to \infty} \frac{\langle |\delta X_{t,t+\tau}^{\ell,k_f}(r;x)-r|^2\rangle_R - \langle |\delta X_{t,t-\tau}^{\ell,k_f}(r;x)-r|^2\rangle_R}{4\tau^3} = I[\bu] 
 \ee
 in the sense of distributions in $\mathbb{T}^2\times[0,T]$, where the production anomaly $I[\bu]$ is defined by \eqref{ForcedLimitBalanceWeak}.
 \end{enumerate}
 \end{thm}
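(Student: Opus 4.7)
The strategy, common to all three parts, is to Taylor-expand the (classically well-posed) mollified Lagrangian ODE \eqref{eqn:traj} in $\tau$, identify the surviving $O(\tau^3)$ asymmetric term with a local, structure-function representation of the Duchon--Robert energy flux, and then reduce parts (ii) and (iii) to part (i) via the innermost limit.  Since $G\in C_0^\infty$, the field $\ol{u}_\ell$ is $C^\infty$ in space for a.e.\ $t$, and the filtered momentum equation $\partial_t \ol{u}_\ell + \ol{u}_\ell\cdot\nabla \ol{u}_\ell = -\nabla \ol{p}_\ell - \nabla\cdot \tau_\ell(u,u) + \ol{f}_\ell$ with $\tau_\ell(u,u):=\overline{(u\otimes u)}_\ell - \ol{u}_\ell\otimes\ol{u}_\ell$ yields bounded $\partial_t\ol u_\ell$; hence $\tau\mapsto X^\ell_{t,t+\tau}(\bx)$ is $C^2$ with $\partial_\tau X^\ell|_{\tau=0}=\ol{u}_\ell(\bx,t)$ and $\partial_\tau^2 X^\ell|_{\tau=0}=a_\ell(\bx,t):=\partial_t \ol{u}_\ell+\ol{u}_\ell\cdot\nabla \ol{u}_\ell$.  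Because the quadratic Taylor coefficient $\tfrac12 a_\ell$ is identical for the forward and backward ODE, all even orders cancel upon subtraction of squared increments, giving
\be
\frac{|\delta X^\ell_{t,t+\tau}(\br;\bx)-\br|^2 - |\delta X^\ell_{t,t-\tau}(\br;\bx)-\br|^2}{4\tau^3}=\frac12\,\delta \ol{u}_\ell(\br;\bx,t)\cdot \delta a_\ell(\br;\bx,t) + O(\tau)
\ee
uniformly in $\br,\bx$; testing against $\psi_R(\br)d\br$ and sending $\tau\to 0$ disposes of the innermost limit and leaves the Eulerian quantity $F_{\ell,R}(\bx,t):=\tfrac12\int\psi_R(\br)\,\delta \ol{u}_\ell\cdot\delta a_\ell\,d\br$.

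The main task is to show $\lim_{R\to 0}\lim_{\ell\to 0}F_{\ell,R}=-\Pi[\bu]$ in $\mathcal D'$.  Using $a_\ell = \partial_t \ol{u}_\ell + \ol{u}_\ell\cdot\nabla \ol{u}_\ell$ and the Karman--Howarth--Monin identity $\delta\ol u_\ell\cdot(\delta\ol u_\ell\cdot\nabla_{\br})\delta\ol u_\ell = \tfrac12\nabla_{\br}\cdot(|\delta\ol u_\ell|^2\delta\ol u_\ell)$ (valid since $\nabla_{\br}\cdot\delta\ol u_\ell=0$), together with an integration by parts in $\br$, one arrives at $F_{\ell,R}=-\Pi^R(\ol u_\ell)+\mathcal R_{\ell,R}$, where $\Pi^R(\ol u_\ell)(\bx,t):=\tfrac14\int\nabla\psi_R(\br)\cdot\delta\ol u_\ell\,|\delta\ol u_\ell|^2\,d\br$ is the Duchon--Robert flux mollified at scales $\ell$ and $R$, and $\mathcal R_{\ell,R}$ gathers three types of remainder: (a) time-derivative pieces $\tfrac14\partial_t\langle|\delta\ol u_\ell|^2\rangle_R$; (b) bulk-gradient pieces such as $\ol u_\ell(\bx)\cdot\int\nabla\psi_R|\delta\ol u_\ell|^2\,d\br$ and $(\ol u_\ell\cdot\nabla)\ol u_\ell(\bx)\cdot[(\ol u_\ell)_R-\ol u_\ell]$; (c) four-term Constantin--E--Titi commutators arising from the pressure and forcing in the filtered equation.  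Each class vanishes in $\mathcal D'$ in the iterated limit: (a) because $\langle|\delta u|^2\rangle_R\to 0$ in $L^{3/2}_x$ by $L^3$-continuity of translations applied to $u\in L^3_tL^3_x$; (b) by the same continuity argument after an integration by parts using $\nabla_{\br}\cdot\ol u_\ell(\bx+\br)=0$; (c) by strong $L^3_tL^3_x$ convergence $\ol u_\ell\to u$ together with Calder\'on--Zygmund bounds on $\nabla p$ from the Euler pressure equation $-\Delta p=\nabla\cdot\nabla\cdot(u\otimes u)-\nabla\cdot f$.  What remains is $-\Pi^R(\ol u_\ell)$, and by the Duchon--Robert theorem \cite{DR00}, $\Pi^R(\ol u_\ell)\to\Pi[\bu]$ in $\mathcal D'$ under $\ell\to 0$ then $R\to 0$, proving (i).

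For (ii), at fixed $\ell,R,\tau$ the strong $L^3_tL^3_x$ convergence $u^\nu\to u$ gives classical convergence of the mollified trajectories $X^{\ell,\nu}_{t,t+\tau}\to X^\ell_{t,t+\tau}$ as $\nu\to 0$; the innermost limit therefore exists and reduces matters to (i) applied to the inviscid limit $\bu$.  The identification $\Pi[\bu]=\varepsilon[\bu]$ is then exactly the content of \eqref{NSLimitBalanceWeak}, namely two equivalent expressions for the anomalous defect in the Eulerian energy balance of the strong Navier--Stokes limit.  For (iii) an analogous reduction after $k_f\to\infty$ applies: the force piece vanishes pointwise at fixed $\ell$ since the smooth mollifier $G_\ell$ annihilates the distributionally vanishing $f^{k_f}$, so one is once again reduced to $-\Pi[\bu]$; the balance \eqref{ForcedLimitBalanceWeak} for the weak unforced-Euler limit reads $\Pi[\bu]=-I[\bu]$, giving the advertised $+I[\bu]$ on the right of \eqref{2dasym}.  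The principal technical obstacle is rigorously justifying the distributional vanishing of the remainders (a)--(c) under only the minimal regularity $u\in L^\infty_tL^2_x\cap L^3_tL^3_x$, with the pressure telescoping in (c) being the most delicate and requiring detailed Calder\'on--Zygmund analysis of $\nabla p$; a complementary subtlety in (iii) is that the apparent pointwise disappearance of $\ol f^{k_f}_\ell$ at fixed $\ell$ is nevertheless consistent with the emergence of a nontrivial $I[\bu]$, because the flux $\Pi[\bu]$ of the weak unforced-Euler limit absorbs the anomalous energy input.
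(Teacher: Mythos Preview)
Your overall architecture matches the paper's: Taylor-expand the mollified trajectories to second order, identify the surviving cubic coefficient as $\tfrac12\langle\delta\ol{u}_\ell\cdot\delta a_\ell\rangle_R$ with $a_\ell=\partial_t\ol{u}_\ell+\ol{u}_\ell\cdot\nabla\ol{u}_\ell$, and then prove this converges to $-\Pi[u]$ under the iterated limit. Where you genuinely diverge from the paper is in that last step. The paper uses the \emph{right-hand side} of the filtered momentum equation, writing $\delta a_\ell=-\nabla_x\delta\ol{p}_\ell+\delta\ol{f}_\ell-\nabla_x\cdot\delta\tau_\ell$, so that pressure and forcing increments are estimated directly and the flux emerges in the subgrid-stress form $\Pi_\ell=-\nabla\ol{u}_\ell:\tau_\ell$; the key tool is the Constantin--E--Titi commutator bound $\|\tau_\ell(u,u)\|_{3/2}\lesssim\sup_{|r|<\ell}\|\delta u\|_3^2$. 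You instead apply a K\'arm\'an--Howarth--Monin identity to $\delta\ol{u}_\ell\cdot\delta a_\ell$ directly, producing the Duchon--Robert structure-function flux $\Pi^R(\ol{u}_\ell)=\tfrac14\int\nabla\psi_R\cdot\delta\ol{u}_\ell|\delta\ol{u}_\ell|^2\,dr$ and remainders. This is a legitimate alternative and in some ways more economical: done cleanly it yields only a time derivative $\tfrac14\partial_t\langle|\delta\ol{u}_\ell|^2\rangle_R$ and a single spatial divergence $\tfrac14\nabla_x\cdot(\ol{u}_\ell\langle|\delta\ol{u}_\ell|^2\rangle_R)$, both of which vanish by integration by parts and $L^3$-continuity of shifts --- no pressure, no forcing, no commutator estimate needed. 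Your reduction of (ii) and (iii) to (i) by first passing $\nu\to0$ (resp.\ $k_f\to\infty$) in the mollified trajectories at fixed $\ell,R,\tau$ is also cleaner than the paper's route, which re-derives the Ott--Mann--Gaw\c{e}dzki relation separately for Navier--Stokes and small-scale-forced Euler (Lemma~\ref{lemma}(ii),(iii)) with explicit treatment of the viscous and forcing terms.

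Two points of caution. First, your remainder class (c) --- ``Constantin--E--Titi commutators arising from the pressure and forcing in the filtered equation'' --- does not actually appear in the KHM decomposition you describe; pressure and forcing enter only if one substitutes the filtered equation's right-hand side for $a_\ell$, which is the paper's route, not yours. You should drop (c) or clarify that it belongs to an alternative bookkeeping. Second, the specific pieces you list under (b), such as $\ol{u}_\ell(x)\cdot\int\nabla\psi_R|\delta\ol{u}_\ell|^2\,dr$ and $(\ol{u}_\ell\cdot\nabla)\ol{u}_\ell\cdot[(\ol{u}_\ell)_R-\ol{u}_\ell]$, do \emph{not} individually vanish under $\ell\to0$ then $R\to0$ at the assumed regularity (the first carries a factor $\|\nabla\psi_R\|_1\sim R^{-1}$, the second an unbounded $\nabla\ol{u}_\ell$); they must be recombined into the total divergence $\nabla_x\cdot(\ol{u}_\ell\langle|\delta\ol{u}_\ell|^2\rangle_R)$ before estimating. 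With those corrections your argument goes through.
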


 The expressions \eqref{Eulerasym}, \eqref{3dasym} and \eqref{2dasym} represent Lagrangian formulae for conservation law anomalies which are local in space and time.  These expressions involve computing the difference of short time dispersion both forward and backward in time\footnote{We remark that  Frishman \& Falkovich \cite{Frish14} have argued on theoretical grounds that, unlike Eq. \eqref{reldispFG}, the short-time expansion of the difference of forward/backward dispersion appearing in \eqref{3dasym} for incompressible Navier-Stokes should have a finite radius of convergence at a fine-grained level ($\ell\equiv 0$), even in the limit of $\nu\to 0$.  This remarkable property may be useful to bridge the gap between the asymptotically short time results presented here and the observations of Richardson dispersion at later times.}, and averaging over particle pairs in a small region of size $R$ with a kernel $\psi$.  Note, however, that the resulting distributions are independent of choice of this kernel.  The main physical interest of Theorem \ref{theorem} is that, the asymmetry in the short-time dispersion precisely correlates with the turbulent cascade direction (at small scales), i.e. the sign of the flux.   Part  (i) of the Theorem applies to  general weak Euler solutions for which the cascade is characterized by the distribution $\Pi[u]$ and may occur with either sign. Part (ii) is concerned with the inviscid limit of Navier-Stokes solutions; it provides rigorous mathematical justification of the observations of Jucha et al. \cite{Jucha14} and shows that without need for ensemble averaging or any assumption of isotropy or homogeneity.      Part (iii) of the Theorem extends these considerations to the setting of $2d$ Euler in the limit of infinitely small-scale forcing, which serves as a model for the inverse energy cascade.    Unlike the situation in $3d$ turbulence, in this setting particles initially disperse faster forward-in-time than backward since typically forcing inputs energy  ($I>0$).  Thus, the information on time--asymmetry of short-time Lagrangian dispersion provided by  Theorem \ref{theorem} (ii) \& (iii) mirrors the observations of Richardson dispersion \cite{Faber09,Saw05} and lends support to the conjecture of \cite{ED15}.  
 
  {Physically, we never really go to the limit of zero time $\tau$, viscosity $\nu$, forcing scale $\ell_f$, filter scale $\ell$ or radial resolution $R$ which are technically required for the Theorem \ref{theorem}. In practice, our results should hold approximately within a range of scales that we now describe.  Assume that the following plausible bound on the $o(\tau^3)$ corrections in the equation for the difference of \eqref{reldispF} and \eqref{reldispB}  holds
\be\label{approxThm}
\Big\langle |\delta X_{t,t+\tau}^{\ell}(r;x)-r|^2\Big\rangle_{R,\varphi} - \Big\langle |\delta X_{t,t-\tau}^{\ell}(r;x)-r|^2\Big\rangle_{R,\varphi} = \langle\Pi_\ell\rangle_\vphi  \tau^3\left[1+ O\left(\frac{\tau}{\tau_\ell}\right)\right],
\ee
where $\tau_\ell= O(\ell/\delta u(\ell))$ is the local eddy turnover time at scale $\ell$, $\delta u(\ell)$ is some measure of the typical velocity fluctuation at that scale, and $\langle \cdot \rangle_\varphi$ is a $\varphi$--weighted space-time average where $\varphi\in C_0^\infty([0,T]\times \mathbb{T}^d)$. The hypothetical bound \eqref{approxThm} is the assertion that the Taylor series in time for trajectories in coarse-grained fields \eqref{eqn:traj} is valid until the local turnover time.  Then the results \eqref{3dasym} and \eqref{2dasym} hold for $\ell$, $R$ and $\tau$ in the ranges
 \begin{align}\label{scalerange1}
d\geq 3&:\qquad \qquad  \phantom{\ell_\nu\ll}\  \ell_\nu\ll \ell\ll R\ll L\quad{\rm and}\quad  \tau \ll \tau_\ell,\\
d=2&:\qquad \qquad \ell_\nu \ll \ell_f \ll  \ell\ll R\ll L \quad {\rm and} \quad  \tau \ll \tau_\ell, \label{scalerange2}
 \end{align} 
 where $\ell_\nu$ is the dissipative cutoff scale (in K41 theory, $\ell_\nu/L\sim Re^{-3/4}$) and $L$ is the integral scale (e.g. a characteristic length-scale  of the large-scale production mechanism).  The scale ranges \eqref{scalerange1} and \eqref{scalerange2} show that our results require a long inertial range with a large separation of scales to hold.  However, the studies  \cite{Jucha14,Bitane12} suggest that in three-dimensions fine-grained analogues of \eqref{approxThm}  hold to a reasonably degree of accuracy even at moderately large Reynolds number.  Improved accuracy as well as the rate of convergence to asymptotia can be investigated numerically.  
 }

The main technical tool used in the proof of Theorem \ref{theorem} is a generalization of the Ott-Mann-Gaw\c{e}dzki relation for particles moving in a coarse-grained fluid velocity field, which may be of independent interest.

 \begin{lemma}[Generalized Ott-Mann-Gaw\c{e}dzki Relation]\label{lemma} Let $\psi_R$ be as in Theorem \ref{theorem}.  \vspace{1mm}
\begin{enumerate}[label=(\roman*)]
\item  ($d\geq 2$) Under conditions of Theorem \ref{theorem}, (i), in the sense of distributions in $\mathbb{T}^d\times[0,T]$, we have
 \be\label{OttMannEuler}
\lim_{R \to 0}\lim_{\ell \to 0} \frac{1}{2} \frac{d}{d\tau} \langle |\delta_r v^{\ell}(\tau;x,t)|^2\rangle_R\Big|_{\tau=0} = -2\Pi[\bu]
 \ee
  where $v^{\ell}(\tau,x;t):= \ol{u}_\ell (X_{t,t+\tau}^{\ell}(x),t+\tau)$ and $\delta_r v^{\ell}(\tau;x,t)=  v^{\ell}(\tau,x+r;t)-v^{\ell}(\tau,x;t)$.
   \vspace{1mm}
   
\item  ($d\geq 3$) Under conditions of Theorem \ref{theorem}, (ii), in the sense of distributions in $\mathbb{T}^d\times[0,T]$, we have
 \be\label{OttMann3d}
\lim_{R \to 0}\lim_{\ell \to 0} \lim_{\nu \to 0} \frac{1}{2} \frac{d}{d\tau} \langle |\delta_r v^{\ell,\nu}(\tau;x,t)|^2\rangle_R\Big|_{\tau=0} = -2\varepsilon[\bu]
 \ee
  where $v^{\ell,\nu}(\tau,x;t):= \ol{(u^\nu)}_\ell (X_{t,t+\tau}^{\ell,\nu}(x),t+\tau)$. \vspace{1mm}
 
  \item  ($d= 2$) Under conditions of Theorem \ref{theorem}, (iii), in the sense of distributions in $ \mathbb{T}^2\times[0,T]$ we have
  \be\label{OttMann2d}
\lim_{R \to 0}\lim_{\ell \to 0} \lim_{k_f \to \infty} \frac{1}{2} \frac{d}{d\tau} \langle |\delta_r v^{\ell,k_f}(\tau;x,t)|^2\rangle_R\Big|_{\tau=0} = 2I[\bu]
 \ee
  where $v^{\ell,k_f}(\tau,x;t):= \ol{(u^{k_f})}_\ell (X_{t,t+\tau}^{\ell,k_f}(x),t+\tau)$. 
   \end{enumerate}
 \end{lemma}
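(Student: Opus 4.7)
The plan is to expand the Lagrangian velocity $v^\ell(\tau;x,t)$ in Taylor series about $\tau=0$, replace the material derivative using the filtered momentum equation, and identify each of the resulting four pieces with the appropriate distributional limit. Since $\ol{u}_\ell$ is spatially smooth for any $\ell>0$, the flow \eqref{eqn:traj} is classical and
\be
\left.\frac{d}{d\tau}v^{\ell}(\tau;x,t)\right|_{\tau=0} = \bigl(\partial_t+\ol{u}_\ell\cdot\nabla\bigr)\ol{u}_\ell(x,t),
\ee
so that
\be
\left.\frac{1}{2}\frac{d}{d\tau}|\delta_r v^{\ell}(\tau;x,t)|^2\right|_{\tau=0} =\delta_r\ol{u}_\ell\cdot\delta_r\bigl[(\partial_t+\ol{u}_\ell\cdot\nabla)\ol{u}_\ell\bigr](x,t).
\ee
Replacing the bracket using the pointwise (valid for $\ell>0$) filtered identity
\be
(\partial_t+\ol{u}_\ell\cdot\nabla)\ol{u}_\ell = -\nabla\ol{p}_\ell-\nabla\cdot\tau_\ell(u,u)+\ol{f}_\ell+\nu\Delta\ol{u}_\ell,\qquad \tau_\ell(u,u):=\ol{(u\otimes u)}_\ell-\ol{u}_\ell\otimes\ol{u}_\ell,
\ee
averaging against $\psi_R(r)\,dr$, and pairing with a test function $\vphi\in C_0^\infty((0,T)\times\mathbb{T}^d)$ splits the Lagrangian object into four distributional pieces $P_R^\ell, S_R^\ell, F_R^\ell, V_R^\ell$ (pressure, subscale stress, forcing, and, in the NS case, viscous).

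The central step is identifying $S_R^\ell$ with the Constantin--E--Titi coarse--grained flux. Integrating $\langle\delta_r\ol{u}_\ell\cdot\delta_r(\nabla\cdot\tau_\ell)\rangle_R$ by parts in $r$ (using $\partial_{x^j}f(x+r)=\partial_{r^j}f(x+r)$ and only $\int\psi_R=1$, not any isotropy), pairing with $\vphi$, and Taylor--expanding $\ol{u}_\ell(x+r)$ in $r$, the leading $R\to 0$ behaviour is
\[
-\langle\vphi,\,\langle\delta_r\ol{u}_\ell\cdot\delta_r(\nabla\cdot\tau_\ell)\rangle_R\rangle = 2\langle\vphi,\Pi_\ell[u]\rangle + o_R(1),\qquad \Pi_\ell[u]:=-\nabla\ol{u}_\ell:\tau_\ell(u,u).
\]
Under the hypotheses of each part, the classical distributional limit $\Pi_\ell[u]\to\Pi[u]$ as $\ell\to 0$ (jointly with $\nu\to 0$ in (ii), via the strong $L^3$ convergence $u^\nu\to u$ and the Leray balance; jointly with $k_f\to\infty$ in (iii), where the $W^{1,r}$ regularity forces $\Pi[u]\equiv 0$) supplies the correct right--hand side. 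The pressure piece $P_R^\ell$ is handled by the elementary identity
\[
\langle\delta_r a\cdot\delta_r b\rangle_R = \int\psi_R(r) a(x+r)\cdot b(x+r)\,dr - a(x)\cdot\!\int\psi_R(r)b(x+r)\,dr - b(x)\cdot\!\int\psi_R(r)a(x+r)\,dr + a(x)\cdot b(x),
\]
applied with $a=\ol{u}_\ell$, $b=\nabla\ol{p}_\ell$: integrating by parts against $\vphi$ and using $\nabla\cdot\ol{u}_\ell=0$ reduces each summand to a remainder of size $\|\ol{p}_\ell\|_{L^{3/2}}\,\|\vphi\|_{C^1}\,O(R)$. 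The forcing piece $F_R^\ell$ is controlled by the same four--term identity: in parts (i) and (ii) it vanishes for fixed $\ell$ as $R\to 0$, while in part (iii) the identity applied to the mixed product $u^{k_f}\cdot f^{k_f}$ converges, in the joint limit $k_f\to\infty,\ell\to 0,R\to 0$, exactly to the production anomaly $I[u]$, yielding $F_R^\ell\to 2\langle\vphi,I[u]\rangle$. In part (ii), the viscous piece $V_R^\ell$ is reduced by an analogous Constantin--E--Titi identity for $\nu|\nabla\ol{u}_\ell^\nu|^2$ together with the Leray energy balance to $-2\langle\vphi,\ve[u]\rangle$ in the $\nu\to 0$ limit.

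The main obstacle is the identification of $S_R^\ell$ without any isotropy assumption on $\psi$. In the original Ott--Mann--Gaw\c{e}dzki derivation, isotropy is used to annihilate the angular contributions of $\int\psi_R(r)\,r\otimes r\,dr$; here I would instead exploit that the final object is a distribution in $(x,t)$, so that any non-isotropic angular piece, after integration by parts in $x$ against $\vphi$ and appeal to $\nabla\cdot\ol{u}_\ell=0$, produces only a remainder of order $R$ that vanishes in the limit. A closely related delicacy appears in part (iii): the four--term expansion of $\delta_r\ol{u}_\ell^{k_f}\cdot\delta_r\ol{f}_\ell^{k_f}$ is the precise Constantin--E--Titi commutator for the mixed product $u\cdot f$, and one must show that its joint $k_f\to\infty,\ell\to 0,R\to 0$ distributional limit equals exactly $I[u]$ -- the analog, for the production anomaly, of the Duchon--Robert identity for $|u|^2$.
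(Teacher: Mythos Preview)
Your decomposition into pressure, stress, forcing, and viscous pieces is the right starting point, and your treatment of $P_R^\ell$ and of the stress piece $S_R^\ell\to -2\langle\vphi,\Pi[u]\rangle$ is essentially the paper's approach (the paper splits $\delta\nabla\ol u_\ell:\delta\tau_\ell$ into four products rather than Taylor-expanding in $r$, but the effect is the same). However, your handling of parts (ii) and (iii) contains a genuine error in identifying \emph{which} term carries the anomaly.

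In part (ii) you claim the viscous piece $V_R^\ell$ converges to $-2\langle\vphi,\ve[u]\rangle$. This is wrong: the order of limits is $\nu\to 0$ \emph{first} at fixed $\ell>0$, and the viscous term is bounded pointwise by $C\nu\ell^{-2}\|u^\nu\|_{L^\infty_t L^2_x}^2$, so $V_R^\ell\to 0$ trivially. The entire contribution comes from $S_R^\ell\to -2\langle\vphi,\Pi[u]\rangle$, and only \emph{afterwards} does one invoke the Duchon--Robert identification $\Pi[u]=\ve[u]$ from \eqref{PiEps}. If your $V$ really gave $-2\ve[u]$ and your $S$ gave $-2\Pi[u]$, the total would be $-4\ve[u]$.

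In part (iii) the same misconception recurs. You assert that the $W^{1,r}$ regularity forces $\Pi[u]\equiv 0$ and that the forcing piece $F_R^\ell$ converges to $2\langle\vphi,I[u]\rangle$. Both claims are false. The $W^{1,r}$ regularity is assumed only for $u^{k_f}$; the limit $u$ is a priori only in $L^3$ space-time, and in fact $\Pi[u]=-I[u]\neq 0$ is exactly the content of \eqref{PiForce}. As for $F_R^\ell$: the limit $k_f\to\infty$ is again taken first at fixed $\ell>0$, and since $f^{k_f}\to 0$ in $\mathcal D'$ one has $\ol{(f^{k_f})}_\ell\to 0$ uniformly, whence $F_R^\ell\to 0$. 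The ``Constantin--E--Titi commutator for $u\cdot f$'' you propose cannot recover $I[u]$ here because the inner limit annihilates the force before $\ell$ or $R$ move. Once again the whole answer comes from $S_R^\ell\to -2\langle\vphi,\Pi[u]\rangle = 2\langle\vphi,I[u]\rangle$ via \eqref{PiForce}.
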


That the energy flux-through-scale should appears in the Ott-Mann-Gaw\c{e}dzki relation \eqref{OttMannEuler} for Euler solutions was already essentially understood in \cite{Falk13,Falk01}.   Lemma \ref{lemma} is a precise mathematical formulation of this observation.  The formulae \eqref{OttMannEuler}, \eqref{OttMann3d}, and \eqref{OttMann2d} are independent of the $r$--averaging kernel $\psi\in C_0^\infty(\mathbb{T}^d)$.

In Section \ref{sec:anom} below, we describe an appropriate mathematical framework for describing dissipation/input anomalies \eqref{zerothLaw3d} and \eqref{zerothLaw2d}, as well as their connection to the turbulence cascade.  Specifically, in \S \ref{sec:3danom}, we review previous work  of Duchon \& Robert \cite{DR00} for weak solutions of the $3d$ Navier-Stokes equations and in \S \ref{sec:2danom}, we extend the work of \cite{DR00} to a small-scale forced $2d$ Euler setup which is proposed as a mathematical model for the study of the inverse cascade. 
Proofs are deferred to Section \ref{sec:proofs}.

 \section{Energy Conservation Anomalies  in $d=2$ and $d\geq 3$ and Turbulent Cascade Direction}
 \label{sec:anom}

As discussed in the introduction, turbulent fluids are remarkably effective at transferring energy across scales.  In dimensions three and higher, this is reflected by the dissipative anomaly, or persistent dissipation of kinetic energy in the limit of zero-viscosity; in dimension two, the inverse energy cascade can transfer energy which is input by a scale localized force up to large scales even in the limit where the typical forcing wavenumber is taken to infinity.  This is all the more surprising because, in both these cases, the direct effect of viscosity/forcing respectively vanish (at least in the sense of distributions).

Deep insight into the mechanism of such dissipative/input anomaly came from Lars Onsager in his famous 1949 paper \cite{O49}. There, he discussed the idea that weak solutions of the Euler equation may not conserve energy due to a non-linear energy cascade despite the fact that no non-ideal effects are present.   Following these ideas, Duchon \& Robert \cite{DR00} consider any weak solution $\bu$ for the Euler equations with velocity satisfying $\bu\in L^3(0,T; L^3(\mathbb{T}^d))$  and with forcing $f  \in L^2(0,T; L^2(\mathbb{T}^d))$
\bea\label{Eeqn1}
\partial_t \bu+ \nabla \cdot (\bu \otimes \bu) \!\! &=& \!\! -\nabla p+f,\\ 
\nabla \cdot u \!\!&=& \!\!0, \label{Eeqn2}
\eea
 {where have assumed mass-density $\rho_0$ is homogeneous and set to unity.} The transfer of energy through scale can then be described as follows.   Let $G\in C_0^\infty(\mathbb{T}^d)$ be a standard mollifier, $G_\ell(r)=\ell^{-d} G(r/\ell)$ and define\footnote{In their work \cite{DR00}, Duchon \& Robert state this result with a different definition of $\Pi_\ell[u]$.  At finite $\ell$, these two expressions differ.  However, in the limit $\ell\to 0$, both expressions converge to the same limit distribution $\Pi[u]$, see \cite{Enotes}  \S IIIb.} the function
\begin{equation}\label{Dell}
\Pi_\ell[\bu] := -\nabla \bar{\bu}_\ell :\tau_\ell(\bu,\bu)
\end{equation} 
where $\ol{u}_\ell(x) = \int_{\mathbb{T}^d} G_\ell(r) u(x+r) dr$ and $\tau_\ell(u,u)= \ol{(u\otimes u)}_\ell - \ol{u}_\ell\otimes \ol{u}_\ell$.  This term represents energy flux-through-scale and appears as a transfer term in the balance of `resolved' kinetic energy 
\begin{equation}\label{EulerBalanceCG}
\partial_t \left(\frac{1}{2}|\ol{\bu}_\ell|^2 \right) +\nabla \cdot \left[\left(\frac{1}{2}|\ol{\bu}_\ell|^2 +\ol{p}_\ell\right)\ol{\bu}_\ell  +\ol{\bu}_\ell \cdot \tau_\ell(u,u)\right]= -\Pi_\ell[\bu] +\ol{\bu}_\ell\cdot \ol{f}_\ell.
\end{equation}
 By Proposition 2 of \cite{DR00}, as $\ell \to 0$ the functions $\Pi_\ell[\bu]\in L^1((0,T)\times \mathbb{T}^d)$ converge in the sense of distributions on $(0,T)\times \mathbb{T}^d$ to a distribution $\Pi[\bu]$ independent of the mollifying sequence, i.e. $\Pi[u]=\Dlim_{\ell\to 0}\Pi_\ell[u]$ where $\Dlim_{\ell\to 0}$ represents the limit is taken in the sense of distributions.
 Further, taking the $\ell\to 0$ limit of Eq. \eqref{EulerBalanceCG}, one finds that $u\in  L^3(0,T; L^3(\mathbb{T}^d))$ satisfies a local energy balance  { (in the sense of space-time distributions)} which includes a possible anomaly due to singularities in the solution
\begin{equation}\label{EulerBalanceWeak}
\partial_t \left(\frac{1}{2}|\bu|^2 \right) +\nabla \cdot \left[\left(\frac{1}{2}|\bu|^2 +p\right)\bu\right]= -\Pi[\bu] +\bu\cdot f, \qquad  \Pi[u] = \Dlim_{\ell\to 0} \Pi_\ell[\bu].
\end{equation}
The limit $\Pi[u]$ need not vanish due to nonlinear energy cascade facilitated by rough velocity fields.  In fact, Onsager famously conjectured \cite{O49} that, in order to dissipate energy, an Euler solution cannot possess H\"{o}lder regularity $u\in C^\alpha$ with $\alpha>1/3$.  Otherwise $\Pi[\bu]=0$.  Eyink \cite{GLE94} proved this assertion under a slightly stronger assumption and Constantin, E \& Titi \cite{CET94} then proved the sharper result for $u\in L^p(0,T;B_{p}^{\alpha,\infty}(\mathbb{T}^d))$ for any $p\geq3$.   Moreover, weak Euler solutions with $\Pi[u]\neq 0$ are known to exist (see, e.g. \cite{LS12,Shn197}) and recent constructions have demonstrated that the regularity threshold proposed by Onsager is sharp \cite{I16,BLSV17}. 

The work of \cite{DR00}, reviewed in \S \ref{sec:3danom}  below, connects the Euler anomaly $\Pi[\bu]$ to its physical origin in $3d$: the energy dissipation anomaly \eqref{zerothLaw3d} for limits of Navier-Stokes solutions.  In \S \ref{sec:2danom} we extend these considerations to a framework designed to describe an `ideal' inverse cascade in $2d$.  In this setting, we show that  $\Pi[\bu]$  is connected to the anomalous energy input by a force acting only at infinitesimally small scales.

\subsection{Dissipation Anomaly in Dimensions $d\geq 3$ and Direct Cascade} \label{sec:3danom}
The forced Navier-Stokes equations governing the evolution of a viscous incompressible fluid are
\bea\label{NSE}
\partial_t \bu^\nu + \nabla \cdot (\bu^\nu\otimes \bu^\nu) \!\! &=& \!\! -\nabla p^\nu + \nu \Delta \bu^\nu+ f^\nu,\\ \label{incom}
\nabla \cdot u^\nu \!\!&=& \!\!0,
\eea
with solenoidal initial conditions 
$u^\nu|_{t=0}=u_0^\nu\in  L^2({\mathbb T}^d)$ and forcing $f  \in L^2(0,T; L^2(\mathbb{T}^d))$.   If \eqref{NSE} and \eqref{incom} are understood in the sense of distributions on $\mathbb{T}^d\times [0,T]$, then weak solutions in the space $\bu^\nu\in  L^\infty(0,T;L^2({\mathbb T}^d))\cap L^2(0,T; H^1({\mathbb T}^d))$ for $\nu>0$, known as Leray solutions, exist globally but are not known to be unique.
Such solutions satisfy a local (generalized) energy equality \cite{DR00}, which states 
\begin{equation}\label{NSbalanceWeak}
\partial_t \left(\frac{1}{2}|\bu^\nu|^2 \right) +\nabla \cdot \left[\left(\frac{1}{2}|\bu^\nu|^2 +p^\nu\right)\bu^\nu-\nu \nabla \frac{1}{2}|\bu^\nu|^2 \right]=-\varepsilon[\bu^\nu]
\end{equation}
where the energy dissipation rate is
\be\label{epsDef}
\varepsilon[\bu^\nu] := \nu |\nabla \bu^\nu|^2+ D[\bu^\nu]
\ee
 with $D[\bu^\nu]$ a Radon measure that represents dissipation due to possible Leray singularities.   
 
 Freely-decaying and externally-forced incompressible turbulence appear substantially similar for dimensions $d\geq 3$; there is a direct (or forward) cascade of energy from large to small scales.  Moreover for $d=3$, as discussed in the introduction, it is a well known experimental observation that at large Reynolds numbers the dissipation rate becomes independent of $\nu$ and is non-vanishing.  Anomalous dissipation, or the zeroth `law' of turbulence \eqref{zerothLaw3d}, would be reflected mathematically by the property that 
\begin{equation}\label{energyAnom}
 \lim_{\nu\to 0} \varepsilon[\bu^\nu] >0
\end{equation}
 as a distribution, i.e. for some positive test function $\varphi$, $\langle \varepsilon[\bu],\varphi\rangle >0$.  Although the property \eqref{energyAnom} remains a mathematical conjecture for solutions of Navier-Stokes equations, there is a wealth of experimental \cite{KRS84,PKW02} and numerical \cite{KRS98,KIYIU03} evidence that supports it.  See also Remark 3 of \cite{DE17}.

Duchon \& Robert connect (\S3 of \cite{DR00}) the anomalous dissipation \eqref{energyAnom} to properties of weak Euler solutions under the assumption that $u^\nu \to u$ strongly in $L^3(0,T;L^3(\mathbb{T}^d))$.  In particular, they showed that the limit $u\in  L^3(0,T; L^3(\mathbb{T}^d))$ is a weak solution to the incompressible Euler equations 
\eqref{Eeqn1}--\eqref{Eeqn2}
 which additionally satisfies a distribution local energy balance arising as the limit of Eq. \eqref{NSbalanceWeak}
 \begin{equation}\label{NSLimitBalanceWeak}
\partial_t \left(\frac{1}{2}|\bu|^2 \right) +\nabla \cdot \left[\left(\frac{1}{2}|\bu|^2 +p\right)\bu\right]= -\varepsilon[\bu] +\bu\cdot f, \qquad\quad    \varepsilon[u]= \Dlim_{\nu\to 0} \varepsilon[\bu^\nu] .
\end{equation}
Moreover, comparing \eqref{NSLimitBalanceWeak} to the balance equation \eqref{EulerBalanceWeak} valid for general weak Euler solutions $u\in L^3$ space-time, the limiting dissipation matches on to the to the non-linear flux $\Pi[u]$, namely
 \be\label{PiEps}
 \Pi[u] =  \varepsilon[u] 
 \ee
 in the sense of distributions,    {i.e.  it holds when averaged over the same (arbitrary) bounded.  Thus, the identification \eqref{PiEps} is space-time local but may not hold pointwise in $(x,t)\in\mathbb{T}^d\times[0,T]$. 
spacetime region.  Moreover, even if there is a global dissipation anomaly  \eqref{zerothLaw3d}, it may well be that the dissipation is not taking place everywhere and for maybe observation regions (test functions $\varphi$), both the distributions $\Pi[u]$ and $\varepsilon[u]$ appearing in \eqref{PiEps} vanish. }   

The physical picture suggested by \eqref{PiEps} is one of cascade; the energy in the system, possibly input at large-scales $L$ by an external force, cascades downscale $\ell\lesssim L$ through nonlinear transfer $\Pi_\ell[u]$ until the smallest scales where it is dissipated by the action of viscosity.  At those smallest scale, the nonlinear flux and viscous dissipation balance.  
This is often termed a direct cascade, which is reflected by the fact the flux is asymptotically downscale $ \Pi[u]\geq0$ by the identification \eqref{PiEps}\footnote{ {We remark that this is an asymptotic statement related to the cascade at arbitrarily small-scales.  It does not imply that the cascade rate is constant (or even positive) throughout all scales in the inertial range, although in practice this is very often observed.}}.  This corroborates Onsager's picture of  infinite--$Re$ number turbulence as being governed by dissipative weak Euler solutions, as it directly relates anomalous dissipation as $\nu\to 0$ to the inviscid limit of viscous energy dissipation of Navier-Stokes solutions. 

  {It is worthwhile mentioning that there is a strong connection between the formula \eqref{PiEps} and the celebrated Kolmogorov $4/5$--law.  To be precise, for any weak Euler solution $u\in L^3(0,T;L^3(\mathbb{T}^d))$, consider the \emph{longitudinal third-order structure function} $ S^L_r[u] \in L^1([0,T]\times \mathbb{T}^d)$ defined by
 \be
 S^L_r[u] := \frac{1}{|r|} \int_{S^{d-1}} \delta u_L(r)^3  \ \rmd \omega(\hat{r}) 
 \ee
 where $\rmd \omega(\hat{r}) $ is the unit Haar measure on $S^{d-1}$ and $\delta u_L(r;x,t) := \hat{r}\cdot \delta u (r;x,t)$ is the longitudinal velocity increment.  In Corollary 1 of \cite{E02},  Eyink  proved that if the distributional limit $ S^L[u]:= \Dlim_{r\to 0} S^L_r[u]$ exists, then it matches onto the the limit of the non-linear flux, $\Pi[u]$.   Specifically, he established the equality \be\label{45thlaw}
 S^L[u] = -\frac{12}{d(d+2)}\Pi[u] ,
 \ee 
 interpreted in the sense of distributions on $[0,T]\times \mathbb{T}^d$.  It follows from Eq. \eqref{45thlaw} and  the identification \eqref{PiEps}  that for any strong limit $u^\nu \to u$  in $L^3(0,T;L^3(\mathbb{T}^d))$ one has $ S^L[u] = -\frac{12}{d(d+2)}\varepsilon[u] $, recovering the usual $4/5$--law in a space-time local sense (no ensemble averaging necessary) in three dimensions.  With this identification in hand, together with \eqref{PiEps} and \eqref{PiForce}, \eqref{45thlaw} and be used to replace $\Pi[u]$, $\varepsilon[u]$ and $-I[u]$ with $ S^L[u]$ in the formulae \eqref{Eulerasym}, \eqref{3dasym} and \eqref{2dasym} respectively, so that these expressions may indeed be regarded as ``Lagrangian analogues" of the $4/5$--law.
  }

\subsection{Anomalous Input in Dimension $d= 2$ and Inverse Cascade}\label{sec:2danom}

 Kraichnan, in a seminar paper \cite{Kr67}, argued that, in the limit of small viscosity, most of the energy input by forcing would cascade to larger scales because of the ``spectral blocking" effect \cite{Fjor53,E96} of the enstrophy flux, with only very little energy `leaking' to small scales. Using dimensional reasoning and physical arguments, Kraichnan proposed that a dual cascade should occur, i.e. there should be a  \emph{inverse energy cascade range} at scales greater than the typical forcing scale $\ell_f$, and also  a \emph{direct enstrophy cascade range} at scales smaller than $\ell_f$.  Moreover, he predicted that the energy spectrum $E(k)$ scales in these ranges as
\be\label{2dSpec}
E(k)\sim \begin{cases} I^{2/3} k^{-5/3},&   k \ll k_f \\
\eta^{2/3} k^{-3},&   k \gg k_f
\end{cases},
\ee
where $I$ is the energy injection rate by forcing, $\eta$ is the enstrophy injection rate\footnote{More correctly, the dual cascade picture was predicted by Kraichnan to occur in a statistically steady state for a fluid with large-scale damping (such as linear friction of hyperviscosity) and viscosity.  These two effects  impost cutoff wavenumbers; damping imposes $k_{ir}$ is an infrared cutoff and viscosity $k_{uv}$ is the corresponding ultraviolet.  Then, the inverse energy cascade range is predicted to be confined to $ k_{ir} \ll k \ll k_f$ whereas the direct enstrophy range to $k_{f} \ll k \ll k_{uv}$.  For simplicity, in our analysis, we consider forced Euler equations, neglecting the effects of large-scale damping and viscosity.  However, our conclusions can easily be modified to accommodate the presence of a damping term and for Navier-Stokes solutions in the limit where viscosity $\nu$ is taken to zero before all others discussed in this section.}.  These conclusions were proposed independently by Batchelor \cite{B69} for freely decaying $2d$ turbulence.  There were derived also by Eyink \cite{E96} who provided a more rigorous basis for the theory using somewhat different arguments.

As discussed in the introduction, we are interested in an ideal inverse cascade setup. As a simplified model, we consider the ideal Euler equations with small-scale forcing in the limit where the force acts only at infinitesimally small-scales.   This limit should result in an inverse energy cascade range permeating to all scales.   To make our setup precise, we consider weak solutions to the forced Euler equations  $\bu^{k_f}\in C([0,\infty);W^{1,r}(\mathbb{T}^2))$  with a forcing which is spectrally concentrated at wavenumber $k_f$,
\bea\label{EE}
\partial_t \bu^{k_f} + \nabla \cdot (\bu^{k_f}\otimes \bu^{k_f}) \!\! &=& \!\! -\nabla p^{k_f} + f^{k_f},\\
\nabla \cdot u^{k_f} \!\!&=& \!\!0,
\eea
with initial data $u_0^{k_f}\in L^2(\mathbb{T}^2)$ and $\omega_0^{k_f}\in L^r(\mathbb{T}^2)$ for $r\in (3/2,\infty)$ and forcing $ f^{k_f}\in L^2(0,T;L^2(\mathbb{T}^2))$.  The existence of at least one such weak solution is guaranteed provided only $r\in (1,\infty)$, see \cite{L96}.  Our restriction that $r> 3/2$ ensures, by  Proposition 6 of \cite{DR00}, that $u^{k_f}$ satisfies the distributional energy balance
\begin{equation}\label{EbalanceWeak2d}
\partial_t \left(\frac{1}{2}|\bu^{k_f}|^2 \right) +\nabla \cdot \left[\left(\frac{1}{2}|\bu^{k_f}|^2 +p^{k_f}\right)\bu^{k_f} \right]=I[\bu^{k_f}], \qquad I[\bu^{k_f}]:= u^{k_f}\cdot f^{k_f}
\end{equation}
where the energy input is due to solely to the forcing.  In particular, there is no anomalous term in the energy balance \eqref{EbalanceWeak2d} arising from possible singularities in the solutions (such as, for example, the $D[u^\nu]$ distribution which appeared in \eqref{epsDef}).    We are interested in the limit in which the typical forcing wavenumber is taken  off to infinity $k_f\to \infty$, or equivalently $\ell_f\to 0$.    Such a force will have ``infinite frequency" and be zero from the distributional point of view (a concrete example is presented in Proposition \ref{forcingProp}).   However, analogous to the dissipative anomaly \eqref{energyAnom}, there may be remnant input of energy from the forcing \eqref{zerothLaw2d}, expressed mathematically by
 \begin{equation}\label{forcingAnom}
\lim_{k_f\to \infty} I[\bu^{k_f}] \neq 0
\end{equation} 
in the sense of distributions.
We call the property \eqref{forcingAnom}  ``anomalous input" or a ``production anomaly" since we expect that typically the role of the forcing is to act as a source for energy rather than a sink\footnote{This depends, of course, on the choice of forcing scheme. For example, energy input is ensured if the forcing is chosen to be solution--dependent, e.g. small-scale Lundgren forcing of the form $f=\alpha \bold{P}_{k_f}[\bu]$ with $\alpha:=\alpha(k_f)>0$ and  $\bold{P}_{k_f}$ is the projection onto a shell around $k_f$ in wavenumber space.  Another attractive choice of force is to take $f$ to be a homogenous Gaussian random field which is white-noise correlated in time, i.e. $\langle f_i (\bx,t) f_j(\bx',t')\rangle = 2F_{ij}(\bx-\bx')\delta(t-t')$.   This has the theoretical advantage that, after averaging over the forcing statistics, the mean injection rate of energy is \emph{solution independent}, i.e. after averaging the balance \eqref{EulerBalanceWeak}, the injection term is $\langle \bu\cdot f\rangle=F_{ii}(0)>0$, insuring input of energy on average. }.  It is called anomalous because it is fed into the flow at infinitely small scales, where irregular turbulent motion is required to facilitate energy transfer up through the inertial range and into the largest scales of the flow.  There are examples of flows with input anomalies of the form \eqref{forcingAnom}.  In fact, Shnirelman's \cite{Shn197} original construction of non-conservative weak Euler solutions have input anomaly $I[\bu] \neq 0$ and are motivated by the physical idea of the inverse energy cascade.  

We now specify details on the forcing schemes we consider.  We require that $ f^{k_f}\in L^\infty(0,T;L^2(\mathbb{T}^2))$  for all $k_f<\infty$ and that  $f^{k_f}\to 0$ in the sense of distribution as $k_f\to\infty$.   This can easily be accomplished, for example, by considering a force with compact spectral support and taking the forcing wavenumber $k_f$ off to infinity (or equivalently the typical forcing length scale $\ell_f= 2\pi/k_f$ is taken to zero).  Indeed

\begin{prop}\label{forcingProp}
Let  $f^{k_f}$ have spectral support inside a band $[k_f/2, 2k_f]$ around some wavenumber $k_f\in(0,\infty)$.  Further, assume that $f^{k_f}\in  L^2(0,T; L^2(\mathbb{T}^2))$ for all $k_f<\infty$ and with $L^1$ norms satisfying $\|f^{k_f}\|_{L^1(0,T; L^1(\mathbb{T}^2))}\leq C k_f^N$ for any $N>0$.  Then $f^{k_f}\to 0$ in the sense of distributions as $k_f\to \infty$.
\end{prop}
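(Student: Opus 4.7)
The plan is to test $f^{k_f}$ against an arbitrary $\varphi \in C_0^\infty([0,T]\times\mathbb{T}^2)$ and show that
\[
\langle f^{k_f},\varphi\rangle := \int_0^T\!\!\int_{\mathbb{T}^2} f^{k_f}(x,t)\,\varphi(x,t)\,dx\,dt \;\longrightarrow\; 0 \quad\text{as }k_f\to\infty.
\]
The key idea is to exploit spectral orthogonality: since the spatial Fourier support of $f^{k_f}(\cdot,t)$ lies in the annulus $\{k\in\mathbb{Z}^2 : |k|\in[k_f/2,2k_f]\}$, the pairing is unchanged if we replace $\varphi$ by its spatial spectral projection $P_{k_f}\varphi$ onto the same annulus. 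That is, $\langle f^{k_f},\varphi\rangle=\langle f^{k_f},P_{k_f}\varphi\rangle$, and we can absorb the loss of size $\|f^{k_f}\|_{L^1}\lesssim k_f^N$ into the rapid decay of $P_{k_f}\varphi$.

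The first step is the elementary Hölder bound
\[
|\langle f^{k_f},\varphi\rangle| \;=\; |\langle f^{k_f}, P_{k_f}\varphi\rangle| \;\leq\; \|f^{k_f}\|_{L^1([0,T]\times\mathbb{T}^2)}\,\|P_{k_f}\varphi\|_{L^\infty([0,T]\times\mathbb{T}^2)}.
\]
By hypothesis the first factor is $O(k_f^N)$. For the second factor, I would write $P_{k_f}\varphi(x,t)=\sum_{|k|\in[k_f/2,2k_f]}\widehat{\varphi}(k,t)\,e^{ik\cdot x}$. Since $\varphi\in C_0^\infty$, integration by parts in $x$ gives that for every $M\in\mathbb{N}$ there exists $C_M$ with $|\widehat{\varphi}(k,t)|\leq C_M(1+|k|)^{-M}$ uniformly in $t\in[0,T]$. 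The number of lattice points in the annulus is $O(k_f^2)$, so
\[
\|P_{k_f}\varphi\|_{L^\infty} \;\leq\; \sum_{|k|\in[k_f/2,2k_f]} |\widehat{\varphi}(k,t)| \;\leq\; C'_M\, k_f^{2-M}.
\]

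Combining these bounds, $|\langle f^{k_f},\varphi\rangle|\leq C''_M k_f^{N+2-M}$, and choosing $M>N+2$ yields the distributional convergence. The argument has no real obstacles: it amounts to noticing that (a) spectral localization transfers to the test function by Parseval, and (b) the polynomial growth of $\|f^{k_f}\|_{L^1}$ is always beaten by the super-polynomial decay of high-frequency Fourier modes of a smooth compactly supported test function. The only mild subtlety is ensuring the estimate $|\widehat{\varphi}(k,t)|\leq C_M(1+|k|)^{-M}$ holds uniformly in $t$, which follows since $\varphi$ is $C^\infty$ with compact time support, so all spatial derivatives are uniformly bounded on $[0,T]\times\mathbb{T}^2$.
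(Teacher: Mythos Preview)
Your proof is correct and follows essentially the same approach as the paper: transfer the spectral projection to the test function via Parseval, bound by $\|f^{k_f}\|_{L^1}\|P_{k_f}\varphi\|_{L^\infty}$, and use rapid Fourier decay of $\varphi$ to beat the polynomial growth of $\|f^{k_f}\|_{L^1}$. If anything, you are slightly more careful than the paper in explicitly counting the $O(k_f^2)$ lattice points in the annulus and correctly identifying the threshold $M>N+2$ (the paper writes the looser condition $n>N$, which is harmless since $n$ is arbitrary).
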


The proof of the proposition is elementary and is differed to \S \ref{sec:proofs}.  
Note that we do not explicitly specify how the amplitudes of the forcing depend on $k_f$; only that the family of forces have  space-time $L^1$ norms bounded by an arbitrary power of $k_f$.  Indeed, it is important  that the norms $\|f^{k_f}\|_{L^2(0,T; L^2(\mathbb{T}^2))}$ not be uniformly bounded in $k_f$.  Otherwise $\lim_{k_f\to \infty} I[\bu^{k_f}] = 0$ as we demonstrate in Remark \ref{forcingRem} of  \S \ref{sec:proofs}.  Thus, the forcing we consider is simultaneously required to act within bands of increasingly high wavenumbers, and have diverging amplitude.  Such forcing schemes have very little restriction\footnote{We are grateful to P. Isett for pointing out an improvement of Prop \ref{forcingProp} from an early preprint which we present here.}, leaving plenty of room to create an input anomaly of the type \eqref{forcingAnom}.

 It is now straightforward, following the approach of \cite{DR00}, to connected the anomaly \eqref{forcingAnom} to dissipative properties of weak \emph{unforced} Euler solutions under the assumption that $u^{k_f} \to u$ strongly in $L^3(0,T;L^3(\mathbb{T}^2))$ provided that $f^k\to 0$ in the sense of distributions as $k_f\to \infty$ (for example, forcing given by Proposition \ref{forcingProp}).  
Then, it is easy to see that the limit $u\in L^3(0,T;L^3(\mathbb{T}^2))$ is a weak solution to the unforced incompressible Euler equations \eqref{Eeqn1}--\eqref{Eeqn2} with $f\equiv 0$ 
 which satisfies the local energy balance arising as the limit of Eq. \eqref{EbalanceWeak2d}
 \begin{equation}\label{ForcedLimitBalanceWeak}
\partial_t \left(\frac{1}{2}|\bu|^2 \right) +\nabla \cdot \left[\left(\frac{1}{2}|\bu|^2 +p\right)\bu\right]= I[\bu],   \qquad\quad   I[u]= \Dlim_{k_f\to \infty} I[\bu^{k_f}].
\end{equation}
The details of this argument are very similar to those given in  \cite{DR00} and are provided in Chapter 3 of \cite{DE17Thesis}. 
Comparing \eqref{NSLimitBalanceWeak} to the balance equation \eqref{EulerBalanceWeak}, which is valid for general weak Euler solutions $u\in L^3$ space-time, the  limiting energy input matches on to the to the negative flux $-\Pi[u]$, namely
 \be\label{PiForce}
 \Pi[u] = -I[u] 
 \ee
 in the sense of distributions.    Again, as we expect the forcing to input energy into the flow, the equality \eqref{PiForce} implies that $ \Pi[u]<0$ for ``typical" forcing schemes.   The physical picture is that, despite the fact that there is no direct forcing in the momentum equation \eqref{Eeqn1},  energy is  fed into the system by the ``infinite" frequency forcing acting at ``infinitesimally" small scales, where it is transferred upscale via a nonlinear inverse cascade until it accumulates at large-scales.

 \section{Proofs}\label{sec:proofs}

 \begin{proof}[Proof of Theorem \ref{theorem}]
Parts (i), (ii) and (iii) have the same proof up to the application of Lemma \ref{lemma}.  Throughout the proof, all super-scripts indicating parametric dependence on $\nu$ or $k_f$ are omitted.

The mollified velocity $\ol{u}_\ell(t,\cdot)$ is $C^\infty(\mathbb{T}^d)$ as a function of space for every time $t\in[0,T]$, and for all $x\in\mathbb{T}^d$, and  {the function $t\mapsto \ol{u}_\ell(x,t)$  is Lipschitz continuous uniformly in $x$. This time-regularity of the mollified field is inherited from the equations of motion, as we now show in the following proposition.}

\begin{prop}\label{timeReg}
 {Let $u\in L^\infty(0,T;L^2(\mathbb{T}^d))$ be any weak solution of the incompressible Navier-Stokes or Euler equations with forcing $f\in L^\infty(0,T;L^2(\mathbb{T}^d))$.  Then the mollified velocity $\ol{u}_\ell(\cdot,x)$ is Lipschitz in time, uniformly in space.}
\end{prop}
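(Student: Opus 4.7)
The plan is to show that $\partial_t \ol{u}_\ell$ is bounded in $L^\infty\bigl((0,T)\times \mathbb{T}^d\bigr)$; integrating in $t$ then yields the claimed uniform-in-$x$ Lipschitz bound. The genuine obstacle is the pressure, which for $u$ only of class $L^\infty_t L^2_x$ is a priori merely a distribution. I would circumvent this using the Leray projector $\mathbb{P}$. Since $\mathbb{P} u = u$, $\mathbb{P}\,\nabla p = 0$, and $\mathbb{P}$ commutes with $\partial_t$ and $\Delta$, the Navier--Stokes (or Euler, with $\nu = 0$) momentum equation rewrites as
\[
\partial_t u \;=\; -\mathbb{P}\,\nabla \cdot (u \otimes u) \;+\; \nu\,\Delta u \;+\; \mathbb{P} f
\]
in $\mathcal{D}'\bigl((0,T)\times \mathbb{T}^d\bigr)$, with no pressure appearing.

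Convolving this identity spatially with $G_\ell \in C_0^\infty(\mathbb{T}^d)$ commutes with $\mathbb{P}$, and spatial derivatives may be shifted onto the smooth kernel, yielding the pointwise-in-$x$ expression
\[
\partial_t \ol{u}_\ell(x,t) \;=\; -\mathbb{P}\bigl[(\nabla G_\ell) \ast (u \otimes u)\bigr](x,t) \;+\; \nu\,(\Delta G_\ell \ast u)(x,t) \;+\; (\mathbb{P}\ol{f}_\ell)(x,t).
\]
Each term is then bounded in $L^\infty_x$, uniformly in $t$, via Fourier series on the torus. With the Leray symbol $P_k := I - k \otimes k / |k|^2$, each of these sums has the form $\sum_k \hat{G}_\ell(k)\, P_k\, m(k)\, e^{i k \cdot x}$, where $m(k)$ is a polynomial of degree at most two in $k$ multiplying one of $\widehat{u\otimes u}(k)$, $\hat{u}(k)$, or $\hat{f}(k)$. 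Elementary bounds on the torus give $|\widehat{u \otimes u}(k)| \le \|u\|_{L^2}^2$ and $|\hat{u}(k)| \le C\|u\|_{L^2}$, $|\hat{f}(k)| \le C\|f\|_{L^2}$, while $G_\ell \in C^\infty$ yields the rapid decay $|\hat{G}_\ell(k)| \le C_{N,\ell}(1+|k|)^{-N}$ for every $N$. Summing termwise produces
\[
\|\partial_t \ol{u}_\ell(\cdot,t)\|_{L^\infty_x} \;\le\; C_\ell \bigl(\|u(\cdot,t)\|_{L^2}^{\,2} + \nu\,\|u(\cdot,t)\|_{L^2} + \|f(\cdot,t)\|_{L^2}\bigr),
\]
and the hypotheses $u, f \in L^\infty(0,T;L^2)$ upgrade this to a uniform $L^\infty$ bound over $(0,T) \times \mathbb{T}^d$.

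The hard part --- indeed the only nontrivial one --- is handling the pressure rigorously; the Leray projection sidesteps this cleanly. A minor additional verification is that the above distributional identity really is the classical time derivative of the continuous function $t \mapsto \ol{u}_\ell(x,t)$, so that the $L^\infty$ bound genuinely yields a Lipschitz constant uniform in $x$. This is routine since $\ol{u}_\ell(x,\cdot)$ is the spatial convolution of $u(\cdot,t) \in L^\infty_t L^2_x$ with the smooth kernel $G_\ell(x - \cdot)$, hence continuous in $t$ (after adjustment on a null set).
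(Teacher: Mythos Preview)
Your proof is correct and takes a genuinely different route from the paper. The paper keeps the pressure in the mollified equation and controls $\nabla \ol{p}_\ell$ by solving the Poisson problem $-\Delta\nabla\ol{p}_\ell=(\nabla\otimes\nabla\otimes\nabla):\ol{(u\otimes u)}_\ell$, then invoking a Sobolev estimate $\|\nabla\ol{p}_\ell\|_{H^m}\lesssim \|\nabla^{m+1}\ol{(u\otimes u)}_\ell\|_{L^\infty}$ and the embedding $H^m\hookrightarrow L^\infty$ for $m>d/2$; the remaining terms (nonlinearity, viscous, forcing) are bounded by Young's convolution inequality in physical space. Your approach instead eliminates the pressure altogether via the Leray projector and then bounds everything termwise in Fourier space, using $|\widehat{u\otimes u}(k)|\le\|u\|_{L^2}^2$, $\|P_k\|\le 1$, and the rapid decay of $\hat G_\ell$. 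This is arguably cleaner: it avoids the detour through $H^m$ estimates and makes transparent that the only obstruction is summability of $\sum_k |k|^2|\hat G_\ell(k)|$, which is immediate for $G_\ell\in C^\infty_c$. The paper's physical-space argument, on the other hand, displays the $\ell$-dependence of the Lipschitz constant more explicitly (powers of $\ell^{-1}$ coming from derivatives landing on $G_\ell$), which is not needed here but is in the spirit of the commutator estimates used elsewhere in the paper. Both arguments rely on the torus geometry---yours through Fourier series, the paper's through periodic elliptic regularity.
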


\begin{proof}[Proof of Proposition \ref{timeReg}]
 { We work with Navier-Stokes solutions, Euler solutions follow by the same argument.  Choosing test functions of the form $\varphi(t, \cdot) := \psi(t) G_\ell(x-\cdot)$, we see that any weak solutions of Navier--Stokes satisfy the mollified equations pointwise for $x\in \mathbb{T}^d$ and distributionally 
for $t\in [0,T]$:
\be\label{weak}
\partial_t \ol{\bu}_{\ell} + \nabla \cdot \ol{(\bu \otimes \bu)}_{\ell} = - \nabla \ol{p}_{\ell}  
+ \nu \Delta  \ol{\bu}_{\ell}+ \ol{f}_{\ell}.
\ee
We aim to establish a uniform-in-$x$ bound for $\partial_t \ol{\bu}_{\ell}(x,\cdot)$  in $L^\infty([0,T])$. Since $u \in L^\infty([0,T]; L^2(\mathbb{T}^d))$,  then for every $x\in \mathbb{T}^d$ we have by Young's convolution inequality that
\begin{align}
 \|\nabla \cdot \ol{(\bu \otimes \bu)}_{\ell}(x,\cdot)\|_{L^\infty([0,T])}
&\leq \frac{1}{\ell}\|(\nabla G)_\ell\|_\infty \|u\|^2_{L^\infty([0,T]; L^2(\mathbb{T}^d))},\\ 
 \|\nu \Delta  \ol{\bu}_{\ell}(x,\cdot)\|_{L^\infty([0,T])}
&\leq \frac{\nu}{\ell^2}\|(\Delta G)_\ell\|_2 \|u\|_{L^\infty([0,T]; L^2(\mathbb{T}^d))},\\
 \| \ol{f}_\ell(x,\cdot)\|_{L^\infty([0,T])} &\leq \| G_\ell\|_2 \|f\|_{L^\infty([0,T]; L^2(\mathbb{T}^d))}.
\label{point-est1} \end{align}
 The pressure-gradient term $\nabla \ol{p}_{\ell}(x,t)$ in (\ref{weak}) is determined using $\nabla\cdot f=0$ from the Poisson 
equation 
\be -\Delta \nabla \ol{p}_{\ell}(\cdot,t)= (\nabla\otimes\nabla\otimes\nabla):\ol{(\bu \otimes \bu)}_{\ell}(\cdot,t).
\ee
Note that the righthand-side belongs to $C^\infty(\mathbb{T}^d)$ for a.e. time $t$.  The solution of the Poisson problem therefore satisfies the following Sobolev estimate for any integer $m> 2$
\be \label{pressbnd}
\|\nabla \ol{p}_{\ell}(\cdot,t)\|_{H^m(\mathbb{T}^d)}
\leq C \|  (\nabla\otimes\nabla\otimes\nabla):\ol{(\bu \otimes \bu)}_{\ell}(\cdot,t) \|_{H^{m-2}(\mathbb{T}^d)}
\ee 
for some constant $C$.  The righthand-side above can be bounded as follows
\begin{align}\nonumber
  \|  (\nabla\otimes\nabla\otimes\nabla):\ol{(\bu \otimes \bu)}_{\ell}(\cdot,t) \|_{H^{m-2}(\mathbb{T}^d)}&\leq C\| \nabla^{(m+1)} \ol{(\bu \otimes \bu)}_{\ell}(\cdot,t) \|_{L^\infty(\mathbb{T}^d)}\\ \label{pressnonlinebdd}
  & \leq C \|\nabla^{(m+1)}G\|_\infty \|u(\cdot,t)\|^2_{L^2(\mathbb{T}^d)}/\ell^{m+1+d}
\end{align}
for some constant $C$. Choosing $m>d/2$, by virtue Sobolev embedding $H^m(\mathbb{T}^d) \hookrightarrow L^\infty(\mathbb{T}^d)$ we have from \eqref{pressbnd} and \eqref{pressnonlinebdd} that 
\be 
\|\nabla \ol{p}_{\ell}(x,\cdot)\|_{L^\infty([0,T])} \leq  C \|\nabla^{(m+1)}G\|_\infty \|u\|^2_{L^\infty([0,T];L^2(\mathbb{T}^d))}/\ell^{m+1+d},
\ee 
 for some constant $C$
and every $x\in \mathbb{T}^d$.
We thus see that every term in (\ref{weak}) for the distributional 
derivative $\partial_t \ol{\bu}_{\ell}(x,\cdot)$ belongs to $L^\infty([0,T])$ so that 
$\ol{\bu}_{\ell}(x,\cdot)$ for every $x\in\mathbb{T}^d$ is Lipschitz continuous. }
\end{proof}

 {
Note moreover that, since $\bu\in L^\infty(0,T;L^2(\mathbb{T}^d))$ (for parts (ii) and (iii), with norms uniformly bounded in $\nu$ and $k_f$), the mollified field and all its derivatives are uniformly bounded in $\bx$ at fixed $\ell>0$ 
\be
\|\nabla^{(n)}\ol{\bu}_\ell\|_{L^\infty([0,T]\times \Omega)} \leq  \|\nabla^{(n)}G\|_\infty \|\bu\|_{L^\infty(0,T;L^1(\mathbb{T}^d))}  /\ell^{n+d}.
\ee
 Therefore $\ol{u}_\ell\in {\rm Lip}([0,T] \times \mathbb{T}^d)$ (it is actually much more regular in space;  $C_x^\infty$ for a.e. $t\in[0,T]$),  the Lagrangian particle trajectories defined by Eqn. \eqref{eqn:traj} exist and are unique.  
It follows from the equation $\dot{\bX}^{\ell}_{t_0,t}= \ol{\bu}_\ell (\bX_{t_0,t}^{\ell},t)$ that $\bX_{t_0,t}^{\ell}(x)\in C^2([0,T])$ uniformly in $\bx$ for fixed $\ell>0$ since 
\be\label{ddotreg}
\ddot{\bX}^{\ell}_{t_0,t}(x) = a^{\ell} (\bX_{t_0,t}^{\ell}(x),t)\in L^\infty([0,T])
\ee
where we have introduced $a^{\ell}(x,t)$, the material derivative of the mollified velocity or the large-scale Eulerian acceleration 
\be\label{filtAcc}
a^{\ell} (x,t)= \left(\partial_t \ol{u}_\ell+ \ol{u}_\ell\cdot \nabla \ol{u}_\ell\right)(x,t).
\ee
The claimed regularity \eqref{ddotreg} follows from the fact that $\partial_t \ol{\bu}_\ell (\cdot, x)\in  L^\infty([0,T])$ and the bound
\be\label{accelbnd}
\| a^{\ell} (x,\cdot)\|_{L^\infty([0,T])}\leq \|\partial_t \ol{\bu}_\ell (\cdot, x)\|_{L^\infty([0,T])}+ \frac{1}{\ell} \|G_\ell\|_{\infty} \|(\nabla G)_\ell\|_{\infty}\|u\|^2_{L^\infty([0,T]; L^2(\mathbb{T}^d))}.
\ee
Thus, since $\bX_{t_0,t}^{\ell}(x)\in C^2([0,T])$ for each $x\in \mathbb{T}^d$, it follows by Taylor's theorem that for any $\bx\in \mathbb{T}^d$ there exist functions $h_f:=h_f(\tau;t,x,r,\ell)$ and $h_b:=h_b(\tau;t,x,r,\ell)$ with the properties that $\lim_{\tau\to 0}h_f(\tau)=\lim_{\tau\to 0}h_b(\tau)=0$ and are such that the following short-time expansion for trajectories both forwards and backwards in time hold}
\begin{align}\label{fwdTraj}
\delta \bX_{t,t+\tau }^{\ell}(r;x) - r &= \phantom{-}\delta \ol{u}_\ell(r;x,t)\tau    + {\frac{1}{2}}\delta a^{\ell} (r;x,t) \tau^2 + {h_f(\tau)}\tau^2,\\
\delta \bX_{t,t-\tau }^{\ell}(r;x) - r &= -\delta \ol{u}_\ell(r;x,t)\tau    +  {\frac{1}{2}}\delta a^{\ell} (r;x,t) \tau^2 +   {h_b(\tau)} \tau^2,\label{bwdTraj}
\end{align}
 { Note that, for fixed $\ell>0$, the fields $\delta \ol{u}_\ell(r;x,t),\delta a^{\ell} (r;x,t),\delta \bX_{t,t+\tau }^{\ell}(r;x) \in L^\infty([0,T])$ uniformly in $x, r\in \mathbb{T}^d$ by the fact that $u\in L^\infty(0,T;L^2(\mathbb{T}^d))$ and  the bound \eqref{accelbnd}.  Moreover, for fixed $\ell>0$, all these fields are $C^\infty$ in the variables $x$ and $r$ for a.e. $t\in [0,T]$.   Since equations \eqref{fwdTraj} and \eqref{bwdTraj} in fact define $h_f$ and $h_b$, it follows that these are smooth functions in the variables $x$ and $r$ for a.e. $t$ and bounded in time for all $x,r\in \mathbb{T}^d$.   Using these facts, squaring \eqref{fwdTraj}, \eqref{bwdTraj} and integrating in $r$ against $\psi_R$, we obtain the expansions for the relative dispersion up to $o(\tau^3)$ errors\footnote{Where the notation $f(\tau)=o(\tau^3)$ denotes $\lim_{\tau\to 0} f(\tau)/\tau^3\to 0$.}}
\begin{align} \label{reldispF}
\Big\langle | \delta \bX_{t,t+\tau}^{\ell}(\br;\bx)-&\br|^2\Big\rangle_R =   \langle S_2^{\ol{\bu}_\ell}(\br,t)\rangle_R\  \tau^2  +   \frac{1}{2} \frac{\rmd}{\rmd \tau} \langle |\delta_r {v}^{\ell}(\tau;x,t)|^2\rangle_R\Big|_{\tau=0} \tau^3 +  {o(\tau^3)},\\ \label{reldispB}
\Big\langle | \delta \bX_{t,t-\tau}^{\ell}(\br;\bx)-&\br|^2\Big\rangle_R =   \langle S_2^{\ol{\bu}_\ell}(\br,t)\rangle_R\  \tau^2  -   \frac{1}{2} \frac{\rmd}{\rmd \tau} \langle |\delta_r {v}^{\ell}(\tau;x,t)|^2\rangle_R\Big|_{\tau=0} \tau^3 +  {o(\tau^3)},
\end{align} 
where we defined $v^{\ell}(\tau,x;t):= \ol{u}_\ell (X_{t,t+\tau}^{\ell}(x),t+\tau)$ and  $\delta_r  \bv^{\ell}(\tau;\bx,t):=  v^{\ell}(\tau,x+r;t)-v^{\ell}(\tau,x;t)$. 
In writing \eqref{reldispF}, \eqref{reldispB}, we used the notation $S_2^{\ol{\bu}_\ell}(\br,t):= |\delta \ol{u}_\ell(r;x,t)|^2$ and the fact that
\begin{align}\label{EulRepRelVelo}
\frac{1}{2} \frac{\rmd}{\rmd \tau} \langle |\delta_r {v}^{\ell}(\tau;x,t)|^2\rangle_R\Big|_{\tau=0}  = \langle \delta \ol{\bu}_\ell(r;x,t) \cdot \delta {a}^{\ell}(r;x,t)\rangle_R.
\end{align}
Subtracting the forward dispersion from the backward, dividing by $\tau^3$ and taking the limit $\tau\to 0$, we have
\be\label{diffBal}
\frac{ \Big\langle | \delta \bX_{t,t+\tau}^{\ell}(\br;\bx)-\br|^2\Big\rangle_R -\Big\langle | \delta \bX_{t,t-\tau}^{\ell}(\br;\bx)-\br|^2\Big\rangle_R }{2\tau^3}\ \xrightarrow{\tau\to 0} \  \frac{1}{2} \frac{\rmd}{\rmd \tau} \langle |\delta_r {v}^{\ell}(\tau;x,t)|^2\rangle_R\Big|_{\tau=0} .
\ee
Next taking the limit $\nu\to 0$ in $d\geq3$ and $k_f\to\infty$ in $d=2$, and finally, taking $R,\ell\to 0$ (in the sense of distributions in $x,t$) and applying the Lemma \ref{lemma}, we obtain the formulae \eqref{Eulerasym}, \eqref{3dasym} and \eqref{2dasym}.
\end{proof}

 \begin{proof}[Proof of Lemma \ref{lemma}]
Fix any $\phi\in C_0^\infty([0,T]\times\mathbb{T}^d)$, consider $\varphi(x,r,t) = \phi(x,t)\psi_R(r)$ and denote 
\be
 \langle f(x,r,t) \rangle_\varphi:=\int_0^T \int_{\mathbb{T}^d\times \mathbb{T}^d} f(x,r,t) \varphi(x,r,t) \ \rmd t \rmd x \rmd r.
\ee

\noindent \emph{Proof of Lemma \ref{lemma} (i)}
The Eulerian acceleration increment \eqref{filtAcc} from the mollified Euler equations is
\begin{align}
 \delta {a}^{\ell}(r;x) \equiv -\nabla_x \delta\ol{p}_\ell(r;x) + \delta \ol{f }_\ell(r;x) -\nabla_x\cdot \delta \tau_\ell (r;x) 
\end{align}
where $\delta\tau_\ell(r;x)  = \tau_\ell(\bu,\bu)(\bx+r)- \tau_\ell(\bu,\bu)(\bx)$ with $\tau_\ell(f,g):= \ol{(fg)}_\ell-\ol{f}_\ell \ol{g}_\ell$ for any $f,g\in L^2(\mathbb{T}^d)$.
 Thus, we have from \eqref{EulRepRelVelo} that
\begin{align} \nonumber
\frac{1}{2} \frac{\rmd}{\rmd \tau} \langle |\delta_r {v}^{\ell}(\tau;x,t)|^2\rangle_{\vphi}\Big|_{\tau=0}  =& -\langle \delta  \ol{u}_\ell(\br;\bx) \cdot\nabla_x   \delta\ol{p}_\ell(r;x) \rangle_\vphi +\langle \delta \ol{ u}_\ell(\br;\bx) \cdot \delta \ol{f}_\ell(\br;\bx) \rangle_\vphi \\
  &- \langle  \delta \ol{u}_\ell(\br;\bx) \cdot\nabla_x\cdot \delta \tau_\ell (\br;\bx) \rangle_\vphi.
\end{align}
We estimate each of these contributions separately.  First we treat the pressure-work term. Since we are assuming $u\in L^{3}(0,T;L^{3}(\mathbb{T}^d))$, we have that $u\otimes u \in  L^{3/2}(0,T;L^{3/2}(\mathbb{T}^d))$  and therefore by strong continuity of Calderon-Zygmund operators in $L^p$ for $1<p<\infty$, it follows that $p\in L^{3/2}(0,T;L^{3/2}(\mathbb{T}^d))$.  Then, by incompressibility, 
\begin{align}\nonumber
\langle \delta  \ol{u}_\ell(\br;\bx) \cdot&\nabla_x   \delta\ol{p}_\ell(r;x) \rangle_\vphi = \langle \nabla_x \cdot[ \delta  \ol{u}_\ell(\br;\bx)   \delta\ol{p}_\ell(r;x)  ]\rangle_\vphi\\
=&-  \int_0^T\int_{\mathbb{T}^d\times{\rm supp}( \psi_R)}\psi_R(r) \nabla\phi(\bx,t) \cdot \delta  \ol{u}_\ell(\br;\bx,t) \delta\ol{p}_\ell(r;x,t)  \rmd t \rmd x dr.
\end{align}
By H\"{o}lder's inequality and the fact that $ {\rm supp}( \psi_R)\subseteq B_R(0)$, we have that
\begin{align}\nonumber
| \langle \delta  \ol{u}_\ell(\br;\bx) \cdot\nabla_x   \delta\ol{p}_\ell(r;x) \rangle_\vphi | &\leq \  \|\psi\|_1 \|\nabla\phi\|_{\infty} \sup_{|\br|<R}\| \delta  \ol{u}_\ell(\br;\cdot)  \|_3 \sup_{|\br|<R}\|  \delta\ol{p}_\ell(r;\cdot)\|_{3/2}\\
 &\leq \  \|G\|_1^2\|\psi\|_1 \|\nabla\phi\|_{\infty} \sup_{|\br|<R}\| \delta  {u}(\br;\cdot)  \|_3 \sup_{|\br|<R}\|  \delta{p}(r;\cdot)\|_{3/2}
\end{align}
where we used Young's inequality for convolutions to remove the mollification.  Thus, we obtain an upper bound independent of $\ell$ which vanishes as $R\to 0$ by strong continuity of shifts in $L^p$ for $1<p<\infty$
\begin{align}
| \langle \delta  \ol{u}_\ell(\br;\bx) \cdot\nabla_x   \delta\ol{p}_\ell(r;x) \rangle_\vphi |  \xrightarrow{R,\ell \to 0}\ 0.
\end{align}
  Similar arguments show that
\begin{align}
 |\langle \delta  \ol{u}_\ell(\br;\bx) \cdot \delta \ol{f}_\ell(\br;\bx) \rangle_\vphi| \ \leq \   \|G\|_1^2 \|\psi\|_1 \|\phi\|_{\infty}\sup_{|\br|\leq R} \|\delta {\bu}(\br;\cdot) \|_2\sup_{|\br|\leq R}\| \delta {f} (\br;\cdot)\|_2 \xrightarrow{R,\ell\to 0}\  0.
\end{align}
 Finally, we estimate the contribution of the turbulent flux:
\begin{align}\nonumber
\langle \delta  \ol{u}_\ell(\br;\bx) &\cdot\nabla_x\cdot \delta \tau_\ell (\br;\bx) \rangle_\vphi =  -\langle \nabla_x\delta   \ol{u}_\ell(\br;\bx) : \delta \tau_\ell (\br;\bx) \rangle_\vphi \\
&\qquad - \int_0^T \int_{\mathbb{T}^d\times{\rm supp}( \psi)}\psi_R(r) \nabla\phi(\bx,t)  \otimes  \delta   \ol{u}_\ell(\br;\bx,t) : \delta \tau_\ell (\br;\bx,t)\rmd t \rmd x \rmd r. \label{fluxTerms}
\end{align}
The second term is easily seen to vanish as $\ell\to 0$ at fixed $R$ since
 \begin{align}\nonumber
&\left|\int_0^T \int_{\mathbb{T}^d\times{\rm supp}( \psi)}\psi_R(r) \nabla\phi(\bx,t)  \otimes  \delta   \ol{u}_\ell(\br;\bx,t) : \delta \tau_\ell (\br;\bx,t)\rmd t \rmd x \rmd r\right| \\ \nonumber
& \hspace{40mm}  \leq  \  \|\nabla_x\phi\|_{\infty}  \sup_{|\br|<R}\| \delta   \ol{u}_\ell(\br;\cdot) \|_3 \sup_{|\br|<R}\|\delta  \tau_\ell (\br;\cdot) \|_{3/2}\\
& \hspace{40mm}  \leq  \ 2 \|G\|_1 \|\nabla_x\phi\|_{\infty}  \sup_{|\br|<R}\| \delta   u(\br;\cdot) \|_3 \sup_{|\br|<R}\|  \tau_\ell(u,u) \|_{3/2}.
\end{align}
We now use the  $L^{p}$ commutator estimate for the coarse-graining cumulant, which states
\begin{equation}\label{CGcumbound}
\| \tau_\ell (f,g) \|_{p}\lesssim   \sup_{|\br|<\ell} \|\delta {f}(\br;\cdot) \|_{2p} \sup_{|\br|<\ell}  \|\delta {g}(\br;\cdot) \|_{2p} .
\end{equation}
See e.g. \cite{CET94} or, more generally, Proposition 3 of \cite{DE17comp}.  Returning to our estimate, we have
 \begin{align}\nonumber
 &\left|\int_0^T \int_{\mathbb{T}^d\times{\rm supp}( \psi)}\psi_R(r) \nabla\phi(\bx,t)  \otimes  \delta   \ol{u}_\ell(\br;\bx,t) : \delta \tau_\ell (\br;\bx,t)\rmd t \rmd x \rmd r\right| \\
& \hspace{60mm} \leq 4 \|G\|_1  \  \|\nabla_x\phi\|_{\infty}\|u\|_3 \sup_{|\br|<\ell}\|\delta {\bu}(\br;\cdot) \|_3^2 \xrightarrow{\ell \to 0} \  0,
\end{align}
which follows from strong continuity of shifts in $L^3$.  

 The remaining terms on the left-hand side of  Eq.  \eqref{fluxTerms} may be expressed as
 \begin{align} \nonumber
\langle \nabla_x\delta   \ol{u}_\ell(\br;\bx) : \delta \tau_\ell (\br;\bx) \rangle_\vphi &=  \langle \nabla_x  \ol{u}_\ell(\bx+\br) :  \tau_\ell (\bx+\br) \rangle_\vphi+ \langle \nabla_x  \ol{u}_\ell(\bx) :  \tau_\ell (\bx)\rangle_\vphi \\
& \qquad -\langle \nabla_x  \ol{u}_\ell(\bx+\br) :  \tau_\ell (\bx)\rangle_\vphi -\langle \nabla_x  \ol{u}_\ell(\bx) :  \tau_\ell (\bx+\br)\rangle_\vphi.
\end{align}
Note that $\nabla_x  \ol{u}_\ell(\bx+\br)=\nabla_r  \ol{u}_\ell(\bx+\br) = \nabla_r  \ol{u}_\ell(\br;x)$ since the role of $x$ and $r$ is symmetric and $\nabla_r  \ol{u}_\ell(x)$. The final term can also be written in a similar form.  After changing variables, it becomes
 \begin{align*}
\int_0^T\!\!\! \int_{\mathbb{T}^d\times\mathbb{T}^d} \vphi(x,r,t)  \nabla_x  \ol{u}_\ell(\bx) :  \tau_\ell (\bx+\br) \rmd t \rmd x dr&= -\int_0^T\!\!\!   \int_{\mathbb{T}^d\times\mathbb{T}^d} \vphi(x-r,r,t)  \nabla_r  \ol{u}_\ell(x-r,t) :  \tau_\ell (x) \rmd t \rmd x dr\\
 & \!\! \! \!  = -\int_0^T \!\!\!  \int_{\mathbb{T}^d\times\mathbb{T}^d} \vphi(x-r,r,t)  \nabla_r \delta \ol{u}_\ell(-r;x,t) :  \tau_\ell (x) \rmd t \rmd x dr.
\end{align*}
Finally, note that the first two terms can be written as $\langle \Pi_\ell[\bu](\bx+\br)\rangle_\vphi$ and $\langle\Pi_\ell[\bu](\bx)\rangle_\vphi$ respectively using the definition resolved energy flux term given by \eqref{Dell}. Therefore, after changing variables in the final term, 
 \begin{align}\nonumber
\langle \nabla_x&\delta   \ol{u}_\ell(\br;\bx) : \delta \tau_\ell (\br;\bx) \rangle_\vphi \\\nonumber
& =\langle \Pi_\ell[\bu](\bx+\br)\rangle_\vphi+ \langle\Pi_\ell[\bu](\bx)\rangle_\vphi\\\label{line2}
&\ \ \ +  \int_0^T \int_{\mathbb{T}^d\times\mathbb{T}^d} \nabla_r \varphi(\bx,\br,t) \cdot \delta \ol{u}_\ell (\br;\bx,t) \cdot \tau_\ell (\bx,t) \rmd t \rmd xdr\\\label{line2.5}
&\ \ \  - \int_0^T \int_{\mathbb{T}^d\times\mathbb{T}^d}  \nabla_r [\varphi(\bx-\br,\br,t)]\cdot    \delta\ol{u}_\ell (-\br;\bx,t) \cdot \tau_\ell (\bx,t)\rmd t \rmd x \rmd r\\ 
&\ \ \  - \int_0^T \int_{\mathbb{T}^d\times\mathbb{T}^d}\nabla_r \cdot \Big( \delta\ol{u}_\ell (\br;\bx,t) \cdot \tau_\ell (\bx,t)   \varphi(\bx,\br,t) -  \delta\ol{u}_\ell (-\br;\bx,t) \cdot \tau_\ell (\bx,t)   \varphi(\bx-\br,\br,t) \Big)\rmd t \rmd x dr. \label{line3}
\end{align}
The two terms in \eqref{line3} vanish by the divergence theorem since the test function $\psi_R$ has compact support. The terms in \eqref{line2}, \eqref{line2.5} easily are seen to vanish as $\ell\to 0$ for any $R> 0$ since, using the estimate \eqref{CGcumbound} for the cumulant $\tau_\ell$, we have
\begin{align*}
\left|\int_0^T\int_{\mathbb{T}^d\times\mathbb{T}^d} \nabla_r \varphi(\bx,\br,t) \cdot \delta \ol{u}_\ell (\br;\bx,t) \cdot \tau_\ell (\bx,t) \rmd t \rmd x \rmd r\right| \leq &\ \|G\|_1 \|\nabla_r \varphi \|_\infty \times \\
& \quad \quad\quad\quad  \sup_{|\br|\leq R}\| \delta u(\br;\cdot)\|_3\sup_{|\br|\leq \ell}\|\delta \bu (\br;\cdot)\|_3^2,\\
\left| \int_0^T \int_{\mathbb{T}^d\times\mathbb{T}^d}  \nabla_r [\varphi(\bx-\br,\br,t)]\cdot    \delta\ol{u}_\ell (-\br;\bx,t) \cdot \tau_\ell (\bx,t)\rmd t \rmd x \rmd r\right| &\leq  \ \|G\|_1 \max\{\|\nabla_x \varphi \|_\infty,\|\nabla_r \psi_R \|_\infty\}\times \\
& \quad \quad\quad\quad \sup_{|\br|\leq R}\|\delta {\bu} (\br;\cdot)\|_3\sup_{|\br|\leq \ell}\|\delta \bu (\br;\cdot)\|_3^2,
\end{align*}
which vanish again as $\ell\to 0$ due to the strong continuity of shifts in $L^3$.   The consideration of \S \ref{sec:3danom} apply and  $\Pi_\ell[u]$ converge in the sense of distributions to $\Pi[u]$ as $\ell\to 0$.  Thus
 \begin{align} \label{twoPi}
\langle \nabla_x\delta   \ol{u}_\ell(\br;\bx,t) : \delta \tau_\ell (\br;\bx,t) \rangle_\vphi \ \xrightarrow{\ell\to 0}\   \langle\Pi[\bu](\bx+\br,t) \rangle_\vphi+  \langle \Pi[\bu](\bx,t) \rangle_\phi.
\end{align}
Finally we analyze the first term above in the limit of $R\to 0$. Since $\psi_R$ approximates the identity, we have
 \begin{align}\nonumber
  \langle \Pi[\bu](\bx+\br,t) \rangle_\vphi &=  \int_0^T \int_{\mathbb{T}^d\times\mathbb{T}^d}\Pi[\bu](\bx,t)\  \phi(\bx- \br)\psi_R(\br)\rmd t \rmd x \rmd r \\
  &=  \int_0^T\int_{\mathbb{T}^d} \Pi[\bu](\bx,t)\ \psi_R*\phi(\bx)\rmd t \rmd x. \label{PiMol}
\end{align}
Since $\psi_R,\phi\in D(\mathbb{T}^d) = C_0^\infty(\mathbb{T}^d)$, then in the limit of $R\to 0$, we have that $\psi_R*\phi\to \phi $ in the standard Fr\'{e}chet topology on test functions. Since the distribution $\Pi\in D'(\mathbb{T}^d)$ is, by definition,  a continuous linear functional on $D(\mathbb{T}^d)$ we have from Eqns. \eqref{twoPi} and \eqref{PiMol} that
 \begin{align}\label{lastEqnproof}
-\langle \nabla_x\delta   \ol{u}_\ell(\br;\bx,t) : \delta \tau_\ell (\br;\bx,t) \rangle_\vphi \ \xrightarrow{R,\ell\to 0}\  - 2 \langle \Pi[\bu]\rangle_\phi
\end{align}
as claimed.

\noindent \emph{Proof of Lemma \ref{lemma} (ii)}. The proof is nearly identical to that of part (i), now using also the strong convergence assumption that $u^\nu\to u$ in $L^3(0,T;L^3(\mathbb{T}^d))$ and that $f^\nu\in L^2(0,T;L^2(\mathbb{T}^d))$ with norms uniformly bounded in $\nu$.    We highlight here only the most different parts of the proof. The Eulerian acceleration increment \eqref{filtAcc} from the mollified Navier-Stokes Equations is
\begin{align}
 \delta {a}^{\ell,\nu}(r;x) \equiv -\nabla_x \delta\ol{(p^{\nu} )}_\ell(r;x) + \nu \Delta_x \delta \ol{(u^{\nu} )}_\ell(r;x) + \delta \ol{(f^{\nu} )}_\ell(r;x) -\nabla_x\cdot \delta \tau_\ell^\nu (r;x)
\end{align}
 where $\tau_\ell^\nu := \tau_\ell (u^\nu,u^\nu)$.  From \eqref{EulRepRelVelo}, we have that
\begin{align*}
\frac{1}{2} \frac{\rmd}{\rmd \tau} \langle |\delta_r {v}^{\ell}(\tau;x,t)|^2\rangle_{\vphi}\Big|_{\tau=0}  =& -\langle \delta  \ol{(u^{\nu} )}_\ell(\br;\bx) \cdot\nabla_x   \delta\ol{(p^{\nu} )}_\ell(r;x) \rangle_\vphi+\nu \langle\delta \ol{(u^{\nu} )}_\ell(\br;\bx) \cdot\Delta_x \delta\ol{(u^{\nu} )}_\ell(\br;\bx) \rangle_\vphi\\
  &+\langle \delta \ol{(u^{\nu} )}_\ell(\br;\bx) \cdot \delta \ol{(f^{\nu} )}_\ell(\br;\bx) \rangle_\vphi- \langle  \delta \ol{(u^{\nu} )}_\ell(\br;\bx) \cdot\nabla_x\cdot \delta \tau_\ell^\nu (\br;\bx) \rangle_\vphi.
\end{align*}
The only new term involves the viscous friction.  By Young's inequality for convolutions, this term is bounded point-wise in $x,t$ and $r$ by
\begin{align}
\nu |\delta \ol{(u^{\nu} )}_\ell(\br;\bx,t) \cdot\Delta_x \delta\ol{(u^{\nu} )}_\ell(\br;\bx,t) | \lesssim \frac{\nu}{\ell^2} \|G\|_1 \|\Delta G\|_1\|\phi\|_{\infty} \|\bu^\nu(t)\|_2^2\ \xrightarrow{\nu\to 0}\ 0 
\end{align}
since $\bu^\nu$ is uniformly bounded $L^\infty(0,T;L^2(\mathbb{T}^d))$.    For the pressure-work term, we need only that $u^\nu \to u$ strongly in $L^3(0,T;L^3(\mathbb{T}^d))$ implies $p^\nu \to p$ strongly in $L^{3/2}(0,T;L^{3/2}(\mathbb{T}^d))$, which follows from strong continuity of Calderon-Zygmund operators in $L^p$ for $1<p<\infty$.  With these strong convergence statements, the estimates for the remaining terms follow by identical arguments to those appearing in the proof of part (i).    Finally, under our assumptions, the considerations of \S \ref{sec:3danom} apply and the flux distribution $\Pi$ is identified with the viscous dissipation anomaly via \eqref{PiEps}. Therefore, the only non-vanishing term in the end is
 \begin{align}
-\lim_{R\to 0}\lim_{\ell\to 0} \lim_{\nu\to0}\  \langle \nabla_x\delta \ol{(\bu^\nu)}_\ell(\br;\bx) : \delta \tau_\ell^\nu (\br;\bx) \rangle_\vphi\ =\    -2 \langle \varepsilon[\bu] \rangle_\phi. 
\end{align}

\noindent \emph{Proof of Lemma \ref{lemma} (iii)} Once again, the proof is nearly identical to that of part (i),(ii), now with strong convergence assumption that $u^{k_f}\to u$ in $L^3(0,T;L^3(\mathbb{T}^2))$.  The Eulerian acceleration increment \eqref{filtAcc} from the mollified forced Euler equations is
\begin{align}
 \delta {a}^{\ell,k_f}(r;x) \equiv -\nabla_x \delta\ol{(p^{k_f} )}_\ell(r;x) + \delta \ol{(f^{k_f} )}_\ell(r;x) -\nabla_x\cdot \delta \tau_\ell^{k_f} (r;x).
\end{align}
Estimating all the terms above accept the forcing follows easily from our previous arguments.  The forcing term is treated differently than in part (i) and (ii) where it was assumed that $\|f^\nu\|_{L^2(0,T;L^2(\mathbb{T}^2))}$ where uniformly bounded in $\nu$.  For $d=2$ in the setting under consideration, by assumption $f^{k_f}$ does not have $L^2(0,T;L^2(\mathbb{T}^2)$ norms bounded uniformly in $k_f$, since if they were, there could be no energy conservation anomaly arising from the forcing (see Remark \ref{forcingRem} below).  Nevertheless, its contribution to \eqref{EulRepRelVelo}, $\langle \delta \ol{(u^{k_f} )}_\ell(\br;\bx) \cdot \delta \ol{(f^{k_f} )}_\ell(\br;\bx) \rangle_\vphi$, vanishes in the limit $k_f\to \infty$ at fixed $\ell$ since 
\be
|\langle \delta \ol{(u^{k_f} )}_\ell(\br;\bx) \cdot \delta \ol{(f^{k_f} )}_\ell(\br;\bx) \rangle_\vphi| \leq  C\|\varphi\|_\infty \|u^{k_f}\|_2 \| \ol{(f^{k_f})}_\ell \|_2 \xrightarrow{k_f\to \infty}  0
\ee
since $\|u^{k_f}\|_2$ are uniformly bounded and $\ol{(f^{k_f})}_\ell\xrightarrow{k_f\to \infty}  0$ uniformly in $x\in \mathbb{T}^2$ since $f^{k_f}$ vanishes in the sense of distributions as $k_f\to \infty$.  

This convergence can be seen more directly if we assume that the force has compact spectral support with $L^1$ norms bounded by a power of $k_f$, as in Proposition \ref{forcingProp}. In that case, we have
\be
\ol{(f^{k_f})}_\ell = \int_{\mathbb{T}^d} G_\ell(\br) f^{k_f}(\bx+\br)  \rmd r  =   \int_{\mathbb{T}^d}  \bold{P}_{k_f}[G_\ell](\br) f^{k_f}(\bx+\br)  \rmd r ,
\ee
where $\bold{P}_{k_f}$ is the projection onto the spectral support of $f^{k_f}$.  Thus, 
\be
|\ol{(f^{k_f})}_\ell| \leq \| \bold{P}_{k_f}[G_\ell]\|_\infty \|f^{k_f}\|_1 \leq Ck_f^N \sum_{k \ \in\   {\rm supp}( \widehat{f^{k_f}})} |\widehat{G}_\ell(k)|\xrightarrow{k_f\to \infty} 0
\ee
since $ \|f^{k_f}\|_1\leq C k_f^N$, $N>0$ by assumption and $|\widehat{G}_\ell(k)|$ decays faster than any polynomial (see proof of Proposition \ref{forcingProp}).  Thus, we obtain convergence -- uniformly in $\bx$ -- of the mollified force to zero as $k_f\to \infty$. If the force does not have compact spectral support, but has Fourier-transform `concentrated' about $k_f$, the same argument can be modified and applied so long as there is sufficiently rapid decay whenever $||k|-k_f|\gg 1$. 

One final modification of the previous proof; the considerations of  \S \ref{sec:2danom} apply and the distributional flux anomaly $\Pi$ is identified with anomalous forcing input $-I$ in the limit $k_f\to\infty$, see Eq. \ref{PiForce}.  Therefore
 \begin{align}
-\lim_{R\to 0}\lim_{\ell\to 0} \lim_{k_f\to\infty} \langle \nabla_x\delta \ol{(\bu^{k_f})}_\ell(\br;\bx,t) : \delta \tau_\ell^{k_f} (\br;\bx,t) \rangle_\vphi\ =\  2 \langle I[\bu] \rangle_\phi.
\end{align}
 \end{proof}

\begin{proof}[Proof of Proposition \ref{forcingProp}]
For $k_f<\infty$, we assume that $ {\rm supp}( \widehat{f^{k_f}}) \subseteq S(k_f)$ where the set $S(k_f) = \{ k \  | \ k_f/2\leq |k|\leq 2  k_f\}$ is a shell in wavenumber space. 
Note that, for each fixed $k_f$ the forcing is necessarily smooth by this frequency localization assumption.  Thus,  since $f^{k_f}\in L^2(0,T; L^2(\mathbb{T}^2))$,  for any test function $\varphi \in C_0^\infty([0,T]\times \mathbb{T}^2)$ we have
\begin{equation}\label{distforcEq}
\int_0^T \int_{\mathbb{T}^2}  \varphi(\bx,t) f^{k_f}(\bx,t)\rmd t \rmd x= \int_0^T \int_{\mathbb{T}^2}   \bold{P}_{k_f}[\varphi(\bx,t)]f^{k_f}(\bx,t)\rmd t\rmd x,
\end{equation}
where $\bold{P}_{k_f}$ is the projection onto the shell $S(k_f)$ of wavenumber support of the force $f^{k_f}$. 
Since the Fourier transform of the $C^\infty$ function decays faster than any polynomial, i.e. $|\widehat{\varphi}(k,t)|=O(|k|^{-n})$ as $|k|\to \infty$ for any $n\in \mathbb{N}$ and $t\in(0,T)$, we see that
\begin{align} \nonumber
\left|\int_0^T \int_{\mathbb{T}^2} \bold{P}_{k_f}[\varphi(\bx,t)]f^{k_f}(\bx,t) \rmd t\rmd x \right| &\leq   \| \bold{P}_{k_f}\varphi \|_\infty \|f^{k_f}\|_1\leq  C k_f^N  \sum_{k\in S(k_f)} |\widehat{\varphi}(k)| \\
& \leq   C \sum_{k\in S(k_f)} (k_f/k)^N k^{N-n} \  \xrightarrow{k_f\to \infty} \ 0
\end{align}
since $n>N$ and $k_f/k\in[1/2, 2]$ on  $k\in S(k_f)$.
By the equality \eqref{distforcEq}, we have that $f^{k_f}\to 0$ in the sense of distributions as $k_f\to \infty$. 
\end{proof}

\begin{rem}\label{forcingRem}
In  Proposition \ref{forcingProp},  we do not assume that $L^2(0,T;L^2(\mathbb{T}^2))$  norms of $f^{k_f}$ uniformly bounded $k_f$.  In fact, if this were the case, the forcing would be unable to sustain an inverse cascade asymptotically!  To see this, we additionally assume that $\bu^{k_f}\to \bu$ strongly in $L^2(0,T;L^2(\mathbb{T}^2))$ as ${k_f\to \infty}$.  Then
\begin{align*}
\int_{\mathbb{T}^2} \varphi(\bx) \bu^{k_f}(\bx) f^{k_f}(\bx)\rmd x&= \int_{\mathbb{T}^2} \varphi(\bx) \bu(\bx) f^{k_f}(\bx) \rmd x+ \int_{\mathbb{T}^2} \varphi(\bx)\left(\bu^{k_f}(\bx) -\bu(\bx)\right)f^{k_f}(\bx) \rmd x\\
&= \int_{\mathbb{T}^2}\bold{P}_{k_f}\left[ \varphi(\bx) \bu(\bx)\right] f^{k_f}(\bx) \rmd x+ \int_{\mathbb{T}^2} \varphi(\bx)\left(\bu^{k_f}(\bx) -\bu(\bx)\right)f^{k_f}(\bx)\rmd x
\end{align*}
 for any test function $\varphi \in C_0^\infty([0,T]\times \mathbb{T}^2)$. The second integral vanishes due to the strong convergence $\bu^{k_f}$ as ${k_f\to \infty}$ as can be seen from
\be
\left|\int_{\mathbb{T}^2}\varphi(\bx)\left(\bu^{k_f}(\bx) -\bu(\bx)\right)f^{k_f}(\bx) \rmd x\right|\leq \|\varphi\|_\infty \|\bu^{k_f}-\bu\|_2 \|f^{k_f}\|_2\xrightarrow{k_f\to \infty} 0.
\ee
On the other hand, the first term also vanishes because
\be
\left|\int_{\mathbb{T}^2} \bold{P}_{k_f}\left[ \varphi(\bx) \bu(\bx)\right] f^{k_f}(\bx)\rmd x \right|\leq \|\bold{P}_{k_f}\left[ \varphi\bu\right] \|_2 \| f^{k_f}\|_2.
\ee
A simple application of H\"{o}lder's inequality shows $\varphi \bu\in L^2$, so that $\|\bold{P}_{k_f}\left[ \varphi\bu\right] \|_2\xrightarrow{k_f\to \infty} 0$. Thus, the power input from the force will vanish distributionally if  $\|f^{k_f}\|_2$ is bounded uniformly in $k_f$. I am grateful to G. Eyink for this observation.
\end{rem}

\subsection*{Acknowledgments}  I am grateful to G. Eyink for numerous helpful comments and discussions.  I would also like to thank P. Constantin,  N. Constantinou, A. Frishman, P. Isett, H.Q. Nguyen, V. Vicol and M. Wilczek, as well as the anonymous referees, for comments that greatly improved the paper. Research of the author is supported by NSF-DMS grant 1703997.



\end{document}